\numberwithin{equation}{section}
\tikzset{sgplattice/.style={inner sep=1pt,norm/.style={red!50!blue},char/.style={blue!50!black},
  lin/.style={black!50}},cnj/.style={black!50,yshift=-2.5pt,left=-1pt of #1,scale=0.5,fill=white}}
\DeclareFontFamily{U}{mathb}{\hyphenchar\font45}
\DeclareFontShape{U}{mathb}{m}{n}{
      <5> <6> <7> <8> <9> <10> gen * mathb
      <10.95> mathb10 <12> <14.4> <17.28> <20.74> <24.88> mathb12
      }{}
\DeclareSymbolFont{mathb}{U}{mathb}{m}{n}
\DeclareMathSymbol{\righttoleftarrow}{3}{mathb}{"FD}
\theoremstyle{plain}
\newtheorem{prop}{Proposition}[section]
\newtheorem{theo}[prop]{Theorem}
\newtheorem{coro}[prop]{Corollary}
\newtheorem{lemm}[prop]{Lemma}
\theoremstyle{definition}
\newtheorem*{prob*}{Problem}
\newtheorem{rema}[prop]{Remark}
\newtheorem{exam}[prop]{Example}
\newcommand{\actsfromleft}{\mathrel{\reflectbox{$\righttoleftarrow$}}}
\newcommand{\actsfromright}{\righttoleftarrow}
\def\lra{\longrightarrow}
\def\rk{\mathrm{rk}}
\def\cA{{\mathcal A}}
\def\fA{{\mathfrak A}}
\def\fD{{\mathfrak D}}
\def\fS{{\mathfrak S}}
\def\fS{{\mathfrak S}}
\def\bP{{\mathbb P}}
\def\bZ{{\mathbb Z}}
\def\bF{{\mathbb F}}
\def\Pic{\mathrm{Pic}}
\def\Aut{\mathrm{Aut}}
\def\ZAut{\mathrm{ZAut}}
\def\SL{\mathsf{SL}}
\def\PSL{\mathsf{PSL}}
\def\GL{\mathsf{GL}}
\def\PGL{\mathsf{PGL}}
\def\Out{\mathrm{Out}}
\def\Burn{\mathrm{Burn}}
\def\Bir{\mathrm{Bir}}
\def\ZBir{\mathrm{ZBir}}
\def\lim{\mathrm{lim}}
\def\Cr{\mathrm{Cr}}
\begin{document}
\title[Conjugacy in the plane Cremona group]{Conjugacy classes of linear actions\\ in the plane Cremona group}

\author[I. Cheltsov]{Ivan Cheltsov}
\address{Department of Mathematics, University of Edinburgh, UK}
\email{I.Cheltsov@ed.ac.uk}

\author[Y. Tschinkel]{Yuri Tschinkel}
\address{
  Courant Institute,
  251 Mercer Street,
  New York, NY 10012, USA
}

\email{tschinkel@cims.nyu.edu}

\address{Simons Foundation\\
160 Fifth Avenue\\
New York, NY 10010\\
USA}

\author[Zh. Zhang]{Zhijia Zhang}

\address{
Courant Institute,
  251 Mercer Street,
  New York, NY 10012, USA
}

\email{zhijia.zhang@cims.nyu.edu}

\date{\today}

\begin{abstract}
We classify regular generically free actions of finite groups on the projective plane, up to conjugation in the Cremona group. 
\end{abstract}

\maketitle

\section{Introduction}
\label{sect:intro}

One of the most intensely studied objects in algebraic geometry is the plane Cremona group 
$$
\Cr_2=\Bir(\bP^2),
$$
the group of birational automorphisms of the plane.
Recall that, over an algebraically closed field of characteristic zero,
$$
\Cr_2:=\langle \PGL_3, \iota\rangle,
$$
where
\begin{equation} 
\label{eqn:crem}
\begin{array}{rccc} 
\iota:&  \bP^2 & \dashrightarrow & \bP^2, \\
& (x,y,z) & \mapsto & \left(\frac{1}{x},\frac{1}{y},\frac{1}{z}\right)
\end{array}
\end{equation}
is the standard Cremona involution.
Conjugacy classes of finite subgroups of $\PGL_3$ have been described in \cite{blichfeldt}: there are three types, recalled in Section \ref{sect:plane}:
\begin{itemize}
    \item intransitive, 
    \item transitive but imprimitive, 
    \item primitive. 
\end{itemize}
There exist isomorphic non-conjugated {\em finite subgroups} of $\PGL_3$ which are conjugated in $\Cr_2$, see Sections~\ref{sect:plane} and \ref{sect:intran}. However, finite subgroups of different types are not conjugated in $\Cr_2$. 

A century later, in the seminal paper by Dolgachev and Iskovskikh \cite{DI},
came the classification of 
finite subgroups of $\Cr_2$, i.e., finite groups
that can act regularly and generically freely on rational surfaces.  However, the classification
of {\em subgroups} as well as of {\em actions}, 
up to conjugation in $\Cr_2$, remained an open problem.
For finite subgroups of $\PGL_3$, this was explicitly asked in \cite[Section 9]{DI}: 
\begin{quote}
{\em Find the conjugacy classes in $\Cr_2$ of actions of finite subgroups of $\PGL_3$. For example, there are two actions of the subgroup of $\PGL_3$ isomorphic to $\fA_5$ and four actions of $\fA_6$ which differ by outer automorphisms of the groups. Are they conjugated in $\Cr_2$?}
\end{quote}
Since then, it was shown in  \cite{two-local} that the two $\fA_5$-actions are conjugated in $\Cr_2$, while the four $\fA_6$-actions are not. A similar problem for the remaining primitive actions follows from \cite{sako}, see Section~\ref{section:primitive} for an explicit solution. 
Furthermore, conjugacy classes of actions of intransitive {\em abelian} subgroups of $\PGL_3$ have been described in \cite{reichsteinyoussininvariant}; in particular, actions of finite cyclic subgroups are always conjugated in $\Cr_2$. 

In this paper, we completely settle the Dolgachev--Iskovskikh problem for the remaining finite subgroups of $\PGL_3$. Our main new tool to distinguish the actions up to conjugation in $\Cr_2$ is the formalism of Burnside invariants, introduced in \cite{BnG} and recalled in Section~\ref{sect:gen}. These fail to distinguish primitive actions \cite[Proposition 7.1 and Example 7.2]{TYZ-3}, and are inconclusive for some imprimitive actions \cite{TYZ-3}, but those are accessible via the equivariant Sarkisov program. However, the new invariants are decisive in the case of intransitive actions. One of the main results of this paper, proved in Section~\ref{sect:intran}, is:

\begin{theo}
\label{thm:intran}
Nonabelian intransitive actions $\phi_1,\phi_2: G\hookrightarrow \PGL_3$ are conjugated in $\Cr_2$ if and only if
$$
[\bP^2\actsfromright \phi_1(G)] =
[\bP^2\actsfromright \phi_2(G)] \in \Burn_2^{\mathrm{inc}}(G).
$$
\end{theo}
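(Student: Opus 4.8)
The statement is an ``if and only if,'' so the plan splits naturally. The ``only if'' direction is the easy one: if $\phi_1$ and $\phi_2$ are conjugated in $\Cr_2$, then the associated $G$-actions on rational surfaces are equivariantly birational, and by the functoriality and birational invariance of the incompressible (equivariant) Burnside group $\Burn_2^{\mathrm{inc}}(G)$ established in \cite{BnG} (and recalled in Section~\ref{sect:gen}), the classes $[\bP^2\actsfromright\phi_i(G)]$ coincide. So the entire content is in the ``if'' direction: equality of Burnside classes forces $\Cr_2$-conjugacy. The strategy there is to reduce to a finite, explicit bookkeeping problem using Blichfeldt's classification of nonabelian intransitive subgroups of $\PGL_3$, and then to resolve each case either by exhibiting an explicit equivariant birational map or by showing the two actions are already $\PGL_3$-conjugate.

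\textbf{Structure of the ``if'' direction.}
First I would fix a nonabelian group $G$ admitting an intransitive embedding in $\PGL_3$. An intransitive action fixes a point or a line, hence a flag, so $\phi(G)$ lands in the subgroup $\GL_2\ltimes \bG_a^2$ (the stabilizer of a flag) up to conjugacy; concretely $G$ acts on $\bP^2$ with an invariant point $p$ and invariant line $\ell$, and the action is determined by a pair of representations (a $1$-dimensional and a $2$-dimensional one, or a reducible $2$-dimensional summand) together with the choice of whether $p\in\ell$. The plan is then: (i) enumerate, for each such $G$, the possible actions $\phi$ up to $\PGL_3$-conjugacy — this is finite and governed by characters/representation theory plus the discrete flag data; (ii) for each pair $\phi_1,\phi_2$ that are \emph{not} $\PGL_3$-conjugate, either compute $[\bP^2\actsfromright\phi_1(G)]\neq[\bP^2\actsfromright\phi_2(G)]$ in $\Burn_2^{\mathrm{inc}}(G)$ — so the theorem's criterion correctly separates them — or, when the classes agree, produce an explicit $G$-equivariant birational self-map of $\bP^2$ (typically built from the flag-preserving structure: equivariant blow-ups of the invariant point and points on the invariant line, followed by blow-downs, i.e. an equivariant Sarkisov link, or an explicit de Jonquières-type transformation commuting with $G$) realizing the conjugacy. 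The key input making (ii) tractable is that intransitive actions are $\Cr_2$-conjugate iff connected by such flag-compatible links, because the relevant Sarkisov program for $G$-surfaces with an invariant point or curve is controlled.

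\textbf{Computing the Burnside classes.}
The engine for distinguishing actions is the explicit formula for $[\bP^2\actsfromright\phi(G)]\in\Burn_2^{\mathrm{inc}}(G)$ as a sum over the $G$-orbits of points and curves with nontrivial stabilizer, weighted by the induced residual actions and the normal-bundle characters; this is exactly the formalism of \cite{BnG} recalled in Section~\ref{sect:gen}. For an intransitive $\phi$ the fixed locus is small and explicit — the fixed point $p$, the fixed line $\ell$, and finitely many fixed points/curves coming from the $\GL_2$-part — so the symbol can be written down in closed form in terms of the two constituent representations of $\phi$. Comparing these closed-form expressions, and checking which relations in $\Burn_2^{\mathrm{inc}}(G)$ (the blow-up relations) can identify them, reduces the ``if'' direction to: \emph{whenever the closed-form symbols agree, the two representation data differ only by an automorphism of $G$ that is realized by a flag-compatible equivariant birational map.}

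\textbf{The main obstacle.}
The hard part is precisely the last reduction: showing that equality in $\Burn_2^{\mathrm{inc}}(G)$ is not just necessary but \emph{sufficient}, i.e. that there are no ``accidental'' coincidences of Burnside symbols between genuinely non-$\Cr_2$-conjugate intransitive actions. Unlike the primitive case — where \cite{TYZ-3} shows the invariant is too coarse — here one must prove the invariant is sharp. I expect this to require (a) a careful analysis of the subgroup of $\Burn_2^{\mathrm{inc}}(G)$ generated by incompressible symbols of curves, where the residual $G$-action on a rational curve is itself recorded, and matching this against the equivariant Minimal Model Program output for intransitive $G$-surfaces; and (b) handling, case by case over Blichfeldt's list, the finitely many nonabelian $G$ where two non-conjugate embeddings exist (the interesting cases are small groups like $\fS_3$, $\fS_4$, $\fA_4$, dihedral and generalized dihedral groups, and central extensions thereof acting with an invariant flag). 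The payoff is that this list is finite and each case is a concrete computation, so while laborious, no single step is conceptually deep beyond the Burnside/Sarkisov machinery already in place.
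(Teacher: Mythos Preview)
Your overall strategy matches the paper's: the ``only if'' direction is birational invariance of $\Burn_2^{\mathrm{inc}}(G)$, and the ``if'' direction is handled by computing the class explicitly and then exhibiting equivariant birational maps whenever the classes agree. But your setup is off in ways that would make the execution harder than necessary, and you are missing the organizing insight that makes the paper's proof short.

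First, the flag/parabolic discussion is a red herring. An intransitive finite $G\subset\PGL_3$ fixes a point, hence (after conjugation) arises from $V=\mathbf{1}\oplus V_2$ with $G\hookrightarrow\GL_2$; there is no extra discrete datum ``whether $p\in\ell$''. The fixed point $[1:0:0]$ and the invariant line $\mathfrak{l}=\{x_1=0\}$ are always disjoint. Second, the case analysis is not over Blichfeldt's list of possible $G$'s but over the quotient $\bar G=G/C_t\subset\PGL_2$, where $C_t$ is the generic stabilizer of $\mathfrak{l}$; so $\bar G\in\{C_n,\fD_n,\fA_4,\fS_4,\fA_5\}$. The point is that the incompressible Burnside class is \emph{exactly} the pair of divisorial symbols
\[
(C_t,\ \bar G\actsfromleft k(\mathfrak{l}),\ (\chi))\ +\ (C_t,\ \bar G\actsfromleft k(\mathfrak{l}),\ (-\chi)),
\]
so equality in $\Burn_2^{\mathrm{inc}}(G)$ is equivalent to matching the $\bar G$-action on $\bP^1$ and matching $\chi$ up to sign. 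This reformulation (Theorem~\ref{thm:intransBurn}) is what you are missing: it turns the ``no accidental coincidences'' worry into a non-issue, and reduces the constructive half to producing, for each $\bar G$, explicit $G$-birational self-maps of $\bP^2$ that flip the sign of $\chi$ and adjust the remaining discrete parameters. These maps are not generic Sarkisov links but de Jonqui\`eres-type maps built from semi-invariant binary forms of $\bar G$ on $\mathfrak{l}$ (e.g., $(x_1,x_2,x_3)\mapsto(f_{12}f_{20},\,x_1x_2f_{30},\,x_1x_3f_{30})$ for $\bar G=\fA_5$). Finally, the boundary case $t=1$ (which forces $G=\fD_n$ with $n$ odd, in the nonabelian setting) is handled separately by the no-name lemma, not by Burnside symbols.
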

We also obtain an explicit geometric description of this condition, see Theorem~\ref{thm:intransBurn}.

Our second main result, proved in Section~\ref{sect:trans}, is:

\begin{theo}
\label{thm:imprim}
Distinct imprimitive actions $\phi_1,\phi_2: G\hookrightarrow \PGL_3$ are conjugated in $\Cr_2$ if and only if they are conjugated by the standard Cremona involution.
\end{theo}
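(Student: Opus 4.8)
The implication ($\Leftarrow$) is immediate: $\iota\in\Cr_2$, so if $\phi_2$ is $\PGL_3$-conjugate to $\phi_1^\iota:=\iota\circ\phi_1\circ\iota^{-1}$, then $\phi_1$ and $\phi_2$ are conjugate in $\Cr_2$. For ($\Rightarrow$) the plan is to run the $G$-equivariant Sarkisov program. Since $\rho(\bP^2)=1$, both $(\bP^2,\phi_1)$ and $(\bP^2,\phi_2)$ are $G$-Mori fibre spaces, namely $G$-del Pezzo surfaces of degree $9$ with $\rho^G=1$, and any $\chi\in\Cr_2$ with $\chi\,\phi_1\,\chi^{-1}=\phi_2$ decomposes $G$-equivariantly into a chain of elementary Sarkisov links between $G$-Mori fibre spaces. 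The theorem then reduces to the claim that \emph{the only $G$-del Pezzo surfaces with $\rho^G=1$ in the $G$-birational class of $(\bP^2,\phi_1)$ are $(\bP^2,\phi_1)$ and $(\bP^2,\phi_1^\iota)$}: granting this, $(\bP^2,\phi_2)$, being such a surface, is $\PGL_3$-conjugate to $\phi_1$ or to $\phi_1^\iota$, and distinctness excludes the former.

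To prove the claim I would first record the local geometry on $\bP^2$. A transitive imprimitive subgroup of $\PGL_3$ has no invariant point and no invariant line, so $\bP^2$ carries no $G$-orbit of length $\le 2$; moreover a $G$-orbit of length $3$ in general position is the vertex set of a $G$-invariant triangle $\Delta$ (collinear triples being excluded, since they would force an invariant line). Consequently the only elementary link from $(\bP^2,\phi_1)$ to another $G$-del Pezzo surface of degree $9$ is the quadratic transformation centred at $\Delta$, which in coordinates adapting $\Delta$ to the coordinate triangle is exactly $\iota$. It factors through $\Bl_\Delta\bP^2$, a del Pezzo surface of degree $6$ with $\rho^G=2$ whose hexagon of $(-1)$-curves carries precisely the two $G$-invariant triples $\{E_i\}$ and $\{\widetilde L_i\}$, contracting respectively to $(\bP^2,\phi_1)$ and $(\bP^2,\phi_1^\iota)$; here one uses that $\Aut(\Bl_\Delta\bP^2)=T\rtimes(\fS_3\times\langle\iota\rangle)$ and that $G\subset N_{\PGL_3}(T)$ cannot act on this surface through the colour-swapping involution $\iota$, as that would act non-regularly on $\bP^2$.

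It then remains to exclude from the class (a) all $G$-minimal del Pezzo surfaces of degree $<9$, and (b) any further degree-$9$ model reachable only through $G$-conic bundles. For (a) I would invoke the Burnside formalism of Section~\ref{sect:gen}: $[\bP^2\actsfromright\phi_1(G)]$ is a $G$-birational invariant, and one checks that it differs from the class of every $G$-minimal del Pezzo surface of smaller degree carrying a faithful generically free $G$-action; the residual cases where the Burnside symbols happen to coincide are settled by the Noether--Fano inequality, which untwists such a birational map back to $\bP^2$. For (b) I would pass through Iskovskikh's classification of elementary transformations of $G$-conic bundles and bound the $G$-birational automorphism group of each conic bundle that occurs in the class -- for instance those obtained from $(\bP^2,\fA_4)$ or $(\bP^2,\fS_4)$ by blowing up a $G$-orbit of length $4$ or $6$ -- verifying that any composition of links that returns to a del Pezzo surface of degree $9$ acts on the set of such models as an element of $\PGL_3$ or as $\iota$.

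The hardest part will be step (b): controlling the conic-bundle excursions. One must show that the $G$-birational automorphism group of each $G$-conic bundle in the class is generated by its regular automorphisms together with fibrewise elementary transformations, and that none of these produces a new $\PGL_3$-conjugacy class of imprimitive actions on $\bP^2$; combined with the sporadic imprimitive groups -- such as $\fA_4$, $\fS_4$, and the Heisenberg group of order $27$ and its overgroups -- which must be treated individually, this is the technical heart of the argument.
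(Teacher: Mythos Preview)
Your Sarkisov-program framing is right, but two steps fail as written. The assertion that ``the only elementary link from $(\bP^2,\phi_1)$ to another $G$-del Pezzo surface of degree $9$ is the quadratic transformation centred at $\Delta$'' is false for several of the groups in Proposition~\ref{prop:trans}: for $G\simeq\fA_4$ there are infinitely many $G$-orbits of length $6$ in general position, each producing a link $\bP^2\dashrightarrow\bP^2$ (Example~\ref{exam:sarkisov}, case $r=6$; cf.\ Lemma~\ref{lemm:A4}); for $G\simeq C_7\rtimes C_3$ there are three orbits of length $7$ giving Geiser involutions; for $G\simeq C_3^2$ or $C_3\rtimes\fS_3$ there are four orbits of length $3$. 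All of these are degree-$9$-to-degree-$9$ links, so they pass through neither a $G$-minimal del Pezzo of degree $<9$ nor a conic bundle and escape both your step (a) and step (b). Moreover, the Burnside formalism you invoke for step (a) is inconclusive for imprimitive actions, as noted in the Introduction: the divisorial strata carry cyclic residual action, so no incompressible symbols arise to separate models.

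The paper's argument is organised differently and sidesteps these issues. Rather than classifying $G$-minimal models, it computes the image of $\bar\beta:\Bir^G(\bP^2)\to\Out(G)$. For $G\not\simeq\fA_4,\fS_4$, birational rigidity of $\bP^2$ and explicit generators of $\Bir^G(\bP^2)$ are imported from \cite{sako}: these are $\Aut^G(\bP^2)$, $\iota$, and, in the three small exceptional cases above, finitely many Geiser involutions. The decisive observation you are missing is Remark~\ref{rema:Bertini-Geiser}: every Geiser or Bertini involution arising from a blow-up of $\bP^2$ \emph{centralizes} $G$, hence lies in $\ker\bar\beta$ and cannot change the action. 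The extra length-$3$ Cremona links for $C_3^2$ and $C_3\rtimes\fS_3$ are $\Aut^G(\bP^2)$-conjugate to $\iota$, since those four orbits form a single $\Aut^G(\bP^2)$-orbit, so they contribute nothing new. This yields $\bar\beta(\Bir^G(\bP^2))=\langle\bar\beta(\Aut^G(\bP^2)),\bar\beta(\iota)\rangle$, and Lemma~\ref{lemm:iota-beta} then shows $\bar\beta(\iota)\notin\bar\beta(\Aut^G(\bP^2))$, which is exactly the theorem. For $\fA_4$ and $\fS_4$ the action on $\bP^2$ is unique, so the statement is vacuous; these groups are handled separately in Lemmas~\ref{lemm:Antoine} and~\ref{lemm:A4}.
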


Together with existing results concerning primitive actions, this settles the Dolgachev--Iskovskikh problem, quoted above \cite[Section 9]{DI}. 
The proof proceeds via classification of groups and actions, followed by a detailed analysis of equivariant Sarkisov links. 
As a byproduct, we compute the normalizer
of $G$ in the Cremona group in many cases, and answer a question posed by L. Katzarkov:

\begin{theo}
\label{thm:P2-norm}
Let $G\subset \PGL_3$ be a finite subgroup.
Then the normalizer of $G$ in $\Cr_2$
is finite if and only if $G$ is transitive
and not isomorphic to any of the following groups: 
$$
\mathfrak{A}_4,\quad  C_3^2, \quad C_3\rtimes\mathfrak{S}_3, \quad  C_7\rtimes C_3, \quad  C_3^2\rtimes C_4.
$$
\end{theo}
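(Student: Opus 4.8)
The plan is to split the statement into the "only if" and "if" directions and handle them through the trichotomy intransitive/imprimitive/primitive, since normalizers in $\Cr_2$ respect this stratification. First I would dispose of the intransitive case: if $G\subset\PGL_3$ fixes a point $p$ (or dually a line), then $G$ preserves the pencil of lines through $p$, so $G$ lies in a subgroup of $\Cr_2$ isomorphic to $\PGL_2\ltimes\PGL_2(\bC(t))$ (the de Jonqui\`eres group), and blowing up $p$ exhibits a $G$-equivariant conic bundle over $\bP^1$. The normalizer then contains the infinite group of fibrewise transformations commuting with the $G$-action — concretely, for a nontrivial such action one produces an explicit one-parameter family of de Jonqui\`eres maps centralizing $G$ — so the normalizer is infinite. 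Hence every intransitive $G$ has infinite normalizer, consistent with the claim (which only lists transitive exceptions).

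Next I would treat the transitive case by invoking the classification recalled in Section~\ref{sect:plane}: the transitive (imprimitive and primitive) finite subgroups of $\PGL_3$ form an explicit finite list. For the \textbf{primitive} groups ($\fA_5$, $\fA_6$, $\PSL_2(\F_7)$, the Hessian group and its subgroups, $\fA_6\cdot$, etc.), I expect the normalizer in $\Cr_2$ to be finite: each such $G$ is rigid in the sense that $(\bP^2,G)$ admits no $G$-equivariant Sarkisov link to anything other than itself (or to a del Pezzo surface of degree $\le 6$ whose automorphism group is finite), so $N_{\Cr_2}(G)$ acts on a finite set of $G$-minimal models and with finite stabilizers, hence is finite. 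For the \textbf{imprimitive} groups, the subgroup preserving the $3$-element set of coordinate points (equivalently a triangle) sits in $C_3^2\rtimes S_3$, and one runs the equivariant Sarkisov program: for all imprimitive $G$ \emph{except} the five listed, the $G$-equivariant birational geometry is again rigid and the normalizer is finite. For the five exceptional groups $\fA_4$, $C_3^2$, $C_3\rtimes\fS_3$, $C_7\rtimes C_3$, $C_3^2\rtimes C_4$, I would exhibit explicitly an infinite-order element of $\Cr_2$ normalizing $G$: for $C_3^2$ acting by translations on an elliptic-curve-type configuration one gets the whole translation part, and for each of the others one produces a $G$-equivariant fibration (conic bundle or rational elliptic fibration obtained by blowing up the relevant orbits) whose relative automorphism group, intersected with the normalizer, is infinite. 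Conversely, for these five I would still need to check there is \emph{no} obstruction, i.e. the normalizer really is the full infinite group — but for the statement as phrased only the "infinite" conclusion is needed.

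To make the finiteness direction uniform, I would package it as follows: given transitive $G$ not in the exceptional list, pick a $G$-minimal rational surface $S$ with a $G$-equivariant birational map $\bP^2\dashrightarrow S$; then $N_{\Cr_2}(G)$ permutes (up to $G$-equivariant isomorphism) the finitely many $G$-minimal models linked to $\bP^2$, and the kernel of this permutation action injects into $\Aut(S,G)=N_{\Aut(S)}(G)$ for a single such $S$. Since $G$ is transitive and non-exceptional, every such $S$ is either $\bP^2$ itself, a del Pezzo surface of degree $\le 6$, or a conic bundle with finite relative automorphism group, and in each case $N_{\Aut(S)}(G)$ is finite — this is where the classification of de Pezzo surfaces with group action and the enumeration of conic bundle structures does the work. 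The main obstacle is precisely this last step: one must run the equivariant Sarkisov program for every transitive $G$ and verify that no link leads to a surface with positive-dimensional automorphism group commuting with $G$; this is the heart of Sections~\ref{sect:trans}, and the five exceptional groups are exactly those for which such a link exists (blow up a special $G$-orbit to reach a conic bundle or $\bP^1\times\bP^1$-type geometry with an infinite centralizer). I would organize the verification group-by-group following the imprimitive classification, reusing the Sarkisov-link analysis already carried out for Theorem~\ref{thm:imprim}, so that Theorem~\ref{thm:P2-norm} becomes a corollary of that case analysis together with the explicit infinite normalizers constructed for the five groups.
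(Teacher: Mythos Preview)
Your intransitive case is fine (and the paper does it in one line: the torus $\{\mathrm{diag}(1,a,a)\}$ already centralizes any intransitive $G$). But the transitive part has a structural flaw that breaks both directions at once. You argue that $N_{\Cr_2}(G)$ permutes the finitely many $G$-minimal models, and that the kernel of this permutation \emph{injects into} $N_{\Aut(S)}(G)$; this is false. An element of the kernel is a $G$-birational self-map that happens to return to an isomorphic model, not a biregular automorphism. ``Finitely many minimal models with finite automorphism groups'' is exactly $G$-birational rigidity, and rigidity does \emph{not} force $\Bir^G(\bP^2)=\Aut^G(\bP^2)$; that is super-rigidity. The paper's own examples show this: $C_3^2\rtimes C_4$ is a \emph{primitive} subgroup (you implicitly placed it among the imprimitive ones), $\bP^2$ is $G$-birationally rigid for it, $\Aut^G(\bP^2)=\mathsf{PSU}_3(\bF_2)$ is finite, and yet $\Bir^G(\bP^2)$ is infinite --- it is generated over $\mathsf{PSU}_3(\bF_2)$ by two involutions coming from blowing up the two $G$-orbits of length $6$ to the Fermat cubic, and the resulting amalgamated product is infinite. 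So your primitive-case argument proves nothing for $C_3^2\rtimes C_4$, and simultaneously your imprimitive list of exceptions has it in the wrong bin.

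The second gap is in how you propose to show the exceptional groups have infinite normalizer. For $C_3^2$, $C_3\rtimes\fS_3$, and $C_7\rtimes C_3$, the plane is $G$-\emph{solid}: there is no $G$-equivariant conic bundle at all, so you cannot ``produce a $G$-equivariant fibration whose relative automorphism group is infinite''. In the paper the infinitude for $C_7\rtimes C_3$ comes from the group generated by the standard Cremona involution and three Geiser involutions (blowups of length-$7$ orbits), and for $C_3^2$ and $C_3\rtimes\fS_3$ it is obtained indirectly by a regularization/contradiction argument (Lemma~\ref{lemm:C3-S3}): if the normalizer were finite one could regularize it on some del Pezzo surface, and the classification forces that surface to be $\bP^2$ again, contradicting that $\Aut^G(\bP^2)\simeq\mathsf{ASL}_2(\bF_3)$ is already maximal among finite subgroups. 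For $\fA_4$ the paper uses the one-parameter family of involutions $\vartheta_\lambda$ coming from length-$6$ orbits landing on cubic surfaces, not the length-$4$ conic bundle. In short, what actually distinguishes the five exceptional groups from the rest is not the existence of a fibration but the presence of \emph{enough} Sarkisov links (multiple short orbits) so that their compositions generate an infinite group; your proposal does not detect this mechanism.
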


Recall that $\Cr_2$ is generated by involutions, over any perfect field \cite{LS24}. Thus, it is reasonable to expect that the normalizer in $\Cr_2$ of any finite subgroup $G\subset \Cr_2$ is also generated by involutions. However, this is not always the case: if $G\subset \PGL_3$ is isomorphic to the {\em Hessian group}, then $G$ is unique in $\Cr_2$, up to conjugation \cite{DI}, coincides with its normalizer in $\Cr_2$ by \cite{sako}, but is not generated by reflections. One can check that normalizers of other transitive subgroups in $\PGL_3$ are generated by reflection. 

Keeping in mind results in \cite{Serge,Anne,Susanna}, one could ask: When is the normalizer in $\Cr_2$ of a finite subgroup $G\subset \Cr_2$ simple? Such $G$ would have to be simple and coincide with their normalizer. By \cite{two-local}, there are exactly two such nonabelian finite subgroups in $\Cr_2$ (up to conjugation), both contained in $\PGL_3$, namely, $\mathfrak{A}_6$ and $\mathrm{PSL}_2(\mathbb{F}_7)$. Using the classification of elements of prime order in $\Cr_2$, completed in \cite{BayleBeauville,deFernex2004,BeauvilleBlanc,Blanc2}, one can also show that a prime order cyclic subgroup $G\subset\Cr_2$ coincides with its normalizer if and only if $G$ is generated by a Geiser or Bertini involution, and the fixed curve of the involution does not admit nontrivial automorphisms.
Such subgroups are conjugated in $\Cr_2$ if and only if the fixed curves of their generating involutions are isomorphic.

\

\noindent
{\bf Acknowledgments:} 
Ivan Cheltsov has been supported by the Leverhulme Trust grant RPG-2021-229,
EPSRC grant EP/Y033485/1, and Simons Collaboration grant \emph{Moduli of varieties}.
Yuri Tschinkel has been supported by NSF grant 2301983.
This work started during the workshop ``Birational Geometry and Number Theory'' held at ICMS (Edinburgh) in November 2024, has been developed at CIRM (Luminy) during of a semester-long \textit{Morlet Chair} program, and was finished at the Simons Foundation (New York) in July 2025.

\section{Basic notions in equivariant geometry} 
\label{sect:gen}

\subsection*{Automorphisms and birational automorphisms}
Throughout, $k$ is an algebraically closed field of characteristic zero. 
Let $X$ be a smooth projective irreducible variety over $k$ and  
$$
\Aut(X)\subseteq \Bir(X),
$$
the group of regular, respectively, birational, automorphisms of $X$.
Let $G$ be a finite group admitting an embedding
$$
\phi: G \hookrightarrow \Aut(X),
$$
we refer to this as a choice of a $G$-action on $X$, and call $X$ a $G$-variety.   
For $G\subseteq \Aut(X)$, let
$$ 
\Aut^G(X)\subseteq \Bir^G(X),
$$
be the normalizers of $G$ in the respective groups. This group-theoretic definition has the following geometric interpretation: $\Aut^G(X)$ is the group of all $G$-{\em biregular} self-maps of $X$, i.e., all all $\varphi\in \Aut(X)$ such that there exists a 
$\psi\in \Aut(G)$ and a commutative diagram

\ 

\centerline{
\xymatrix{
X \ar[d]_{g} \ar[rr]^{\varphi} & & X\ar[d]^{\psi(g)}\\
X \ar[rr]^{\varphi}                     & &  X   
}
}

\ 

\noindent 
for all $g\in G$. Similarly, $\Bir^G(X)$ is the group of all $G$-{\em birational} 
self-maps, defined by the same diagram, with $\varphi\in \Bir(X)$. There are natural homomorphisms

\
\begin{equation}
\label{eqn:diag}
\xymatrix{
\Aut^G(X) \ar[dr]_{\alpha} \ar@{^{(}->}[rr]  &  &  \Bir^G(X)\ar[dl]^{\beta} \\
 & \Aut(G) &}
\end{equation}
\

\noindent
and we denote the kernels, i.e., centralizers of the respective groups, by 
$$
\ZAut^G(X) \subseteq \ZBir^G(X).  
$$

A birational map $X\dashrightarrow Y$ between $G$-varieties is called 
\begin{itemize}
    \item $G$-equivariant, if it commutes with the action of $G$, 
    \item $G$-birational, if it commutes with the action of $G$, up to a fixed automorphism $\psi\in \Aut(G)$.
\end{itemize}

\subsection*{Conjugacy of groups}
Given a $G$-variety $X$ and embeddings 
$$
\phi_1, \phi_2:  G\hookrightarrow \Aut(X)
$$ 
one has the following general

\

\noindent
{\bf Problem A}:
    When are $\phi_1(G)$ and $\phi_2(G)$ 
    conjugated in $\Aut(X)$?
 
    \
    
 \noindent
{\bf Problem B}:
    When are $\phi_1(G)$ and $\phi_2(G)$ conjugated 
    in  $\Bir(X)$?

\

In the first case, this means that there is a $G$-biregular self-map $\varphi: X\to X$ conjugating $\phi_1(G)$ and $\phi_2(G)$. 
In the second case, $\varphi$ is $G$-birational.

\subsection*{Conjugacy of actions}
Consider actions 
$\phi_1, \phi_2:  G\hookrightarrow \Aut(X)$ on a $G$-variety $X$ as above. They are conjugated in $\Aut(X)$ if there exists a $\varphi\in \Aut(X)$ such that 
$$
\phi_2=\varphi\circ \phi_1\circ \varphi^{-1}. 
$$
In this case, we don't distinguish these two actions! In the following, when we refer to action, we always mean up to conjugation in $\Aut(X)$.  

Clearly, if $\phi_1$ and $\phi_2$ give the same action on $X$, then the groups $\phi_1(G)$ and $\phi_2(G)$ are conjugated in $\Aut(X)$.
However, the converse does not always hold. 

\ 

\noindent 
{\bf Problem AA}:
Assuming that the groups $\phi_1(G)$ and $\phi_2(G)$
are conjugated in $\Aut(X)$, when are the actions $\phi_1,\phi_2$ the same?

\ 

We can consider a similar problem in $\Bir(X)$: we say that the actions $\phi_1$ and $\phi_2$ are conjugated in $\Bir(X)$ if there exists a 
$\varphi\in \Bir(X)$ such that 
$$
\phi_2=\varphi\circ \phi_1\circ \varphi^{-1}. 
$$
As before, if $\phi_1$ and $\phi_2$ are conjugated in $\Bir(X)$, then the groups $\phi_1(G)$ and $\phi_2(G)$ are conjugated in $\Bir(X)$.

\

\noindent 
{\bf Problem BA}:
Assuming that the groups $\phi_1(G)$ and $\phi_2(G)$
are conjugated in $\Bir(X)$, when are the actions $\phi_1,\phi_2$ conjugated in $\Bir(X)$?

\

To solve these problems, we can precompose one of the actions with a suitable $G$-biregular self-map $\varphi\in \Aut(X)$, respectively, with a $G$-birational map 
$\varphi\in \Bir(X)$,
to obtain  
$$
\phi_1(G)=\phi_2(G),  
$$
as subgroups of $\Aut(X)$.   
Then $\phi_2=\phi_1\circ \psi$, for some $\psi \in \Aut(G)$, and the actions are conjugated in $\Aut(X)$ if and only if $\psi$ lies in the image of 
the homomorphism $\alpha$ from diagram \eqref{eqn:diag}.  
We have a diagram
\[
\xymatrix{
\phi_1(G)=\phi_2(G)  \ar@{^{(}->}[r]  & \Aut^G(X) \ar[d]_{\alpha} \ar@{^{(}->}[r]  &  \Bir^G(X) \ar[dl]^{\beta}\\
\mathrm{Inn}(G)  \ar@{^{(}->}[r]  & \Aut(G) \ar@{->>}[r]  &  \mathrm{Out}(G),}
\]
\ 

\noindent
where 
$\mathrm{Inn}(G)$ and  $\mathrm{Out}(G)$ are inner and outer automorphisms of $G$. 
Clearly, if $\psi\in \mathrm{Inn}(G)$, the actions are conjugated in $\Aut(G)$. 
Therefore, Problem AA reduces to

\ 

\noindent
{\bf Problem IA}:
Determine the image of 
$$
\bar{\alpha}: \Aut^G(X) \to \mathrm{Out}(G). 
$$

 \ 

\noindent 
Similarly, Problem BA reduces to

\ 

\noindent
{\bf Problem IB}:  
Determine the image of 
$$
\bar{\beta}: \Bir^G(X) \to \mathrm{Out}(G). 
$$

 \ 

Using this terminology, we can compute the number of $G$-actions with a fixed image in $\Aut(X)$ as
$$
\frac{|\mathrm{Out}(G)|}{|\mathrm{image}(\bar{\alpha})|}. 
$$
Similarly, the number of actions with fixed image in $\Aut(X)$, up to conjugation in $\Bir(X)$, is given by
$$
\frac{|\mathrm{Out}(G)|}{|\mathrm{image}(\bar{\beta})|}. 
$$

Problem IA is purely group-theoretical, and its solution can be automated with computer algebra packages such as {\tt Magma}. On the other hand, Problem IB is of birational nature. If we knew the generators of $\Bir^G(X)$ then we could compute the image of $\bar{\beta}$. In practice, this is rarely feasible. 
However, in some situations, the solution is easy, for example,  when 
\begin{itemize}
    \item $\mathrm{Out}(G)$ is trivial, e.g., $G=\fS_n$,  with $n\neq 6$,  
    \item $\Aut(X)=\Bir(X)$, so that Problems IA and IB coincide, e.g., if $X$ is a curve or if the canonical class $K_X$ is ample, 
    \item $\Aut^G(X)=\Bir^G(X)$, 
    \item one can identify sufficiently many elements in $\Bir^G(X)$ to prove the surjectivity of $\bar{\beta}$. 
\end{itemize}

\begin{exam}
There is a unique $\fA_5\subset \PGL_3\simeq\Aut(\bP^2)$,
up to conjugation. We have $\mathrm{Out}(\fA_5) \simeq C_2$ and 
$$
\Aut^{\fA_5}(\bP^2)\simeq \fA_5.  
$$    
As there are no nontrivial homomorphisms $\fA_5\to C_2$, we conclude that 
there are two $\fA_5$-actions on $\bP^2$. However, 
$$
\Bir^{\fA_5}(\bP^2)\simeq \fS_5,  
$$
and the homomorphism 
$\bar{\beta}$ is surjective \cite{two-local}. 
It follows that both $\fA_5$-actions on $\bP^2$ are conjugated in $\Cr_2$; a realization of this conjugation is in 
\cite[Lemma 6.3.3]{CS}, see also Example~\ref{exam:a5}. 
\end{exam}
Additional examples with surjective $\bar{\beta}$ 
will be given in Section~\ref{sect:plane}. 

\subsection*{Equivariant Sarkisov program for rational surfaces}

Let $X$ be a smooth rational $G$-surface. The equivariant Minimal Model Program implies that $X$ is $G$-birational to a $G$-surface 
$S$ such that 
\begin{itemize}
\item $S$ is a smooth del Pezzo surface with $\rk \Pic(S)^G=1$, or
\item $S$ admits a $G$-equivariant conic bundle 
$$
\pi :S\to \bP^1
$$ 
with $\rk \Pic(S)^G=2$.     
\end{itemize}
Such $G$-birational models are called $G$-{\em Mori fiber spaces} (in dimension 2). 
Any $G$-birational map between such $G$-Mori fiber spaces can be factorized into a sequence of elementary $G$-birational maps, called {\em $G$-Sarkisov links}. 
We recall their classification, following the classification of Sarkisov links between 2-dimensional Mori fiber spaces over nonclosed ground fields in \cite{Isk}.  
There are three basic types:
\begin{itemize}
\item {\bf DP-DP} (del Pezzo to del Pezzo):

\centerline{
\xymatrix{
S & \ar[l]_{\sigma} \tilde{S} \ar[r]^{\sigma'} & S', 
}
}

\ 

\noindent
where 
\begin{itemize} 
\item 
$\sigma$ is a blowup of a $G$-orbit $\Sigma$ such that $|\Sigma|<K_S^2$ and $\tilde{S}$ is a del Pezzo surface, 
\item $S'$ is a (smooth) del Pezzo $G$-surface with $\rk\Pic(S')^G=1$, 
\item 
$\sigma'$ is a blowup of a $G$-orbit $\Sigma'$ such that $|\Sigma'|<K_{S'}^2$; 
\end{itemize}
\item {\bf DP-CB} (del Pezzo to conic bundle / and its inverse): 

\centerline{
\xymatrix{
S & \ar[l]_{\sigma} \tilde{S} \ar[r]^{\tilde{\pi}} & \bP^1 
}
}

\ 

\noindent
where 
\begin{itemize} 
\item $\sigma$ is a blowup of a $G$-orbit $\Sigma$ such that $|\Sigma|<K_S^2$ and $\tilde{S}$ is a del Pezzo surface, 
\item  $\tilde{\pi}$ is a $G$-equivariant conic bundle; 
\end{itemize}
    \item {\bf CB-CB} (conic bundle to conic bundle):
    
\centerline{
\xymatrix{
S  \ar[d]_{\pi}  & \ar[l]_{\sigma}\tilde{S} \ar[r]^{\sigma'} & S'\ar[d]^{\pi'} \\
\bP^1 \ar@{=}[rr] & & \bP^1
}
}

\noindent 
where 
\begin{itemize} 
\item  $\pi'$ is a $G$-equivariant conic bundle, with $K_{S'}^2=K_{S}^2$, 
\item $\sigma$ is a blowup of a $G$-orbit $\Sigma$ such that
no points of $\Sigma$ are contained in the singular fibers of $\pi$ and every smooth fiber of $\pi$ contains at most one point of $\Sigma$, 
\item $\sigma'$ is the blowdown of the strict transforms of the fibers of $\pi$ that contain points of $\Sigma$. 
\end{itemize}
\end{itemize}

\begin{exam}
\label{exam:sarkisov}
We list $G$-Sarkisov links that start from $S=\bP^2$:
\begin{itemize}
    \item {\bf DP-DP} with $r:=|\Sigma|\in \{ 2,3,5,6,7,8\}$ 
    \begin{itemize}
        \item $r=2$: $S'=\bP^1\times \bP^1$ and $\sigma'$ blows down the strict transform of the line passing through $\Sigma$,
        \item $r=3$: $S'=\bP^2$ and $\sigma'$ blows down the strict transforms of lines passing through pairs of points in $\Sigma$, 
        \item $r=5$: $S'$ is a del Pezzo surface of degree 5 and $\sigma'$ 
        blows down the strict transform of the conic through $\Sigma$, 
        \item $r=6$: $S'=\bP^2$ and $\sigma'$ blows down strict transforms of conics passing through 5 points of $\Sigma$, 
        \item $r=7$: $\tilde{S}$ is a del Pezzo surface of degree 2 and $\sigma'=\sigma\circ \tau$, where $\tau$ is the involution of the anticanonical double cover $\tilde{S}\to \bP^2$,   
        \item $r=8$: $\tilde{S}$ is a del Pezzo surface of degree 1 and $\sigma'=\sigma\circ \tau$, where $\tau$ is the involution of the double cover $\tilde{S}\to \bP(1,1,2)$ given by the linear system $|-2K_{\tilde{S}}|$.            
    \end{itemize}
    \item {\bf DP-CB} with $r=|\Sigma|\in \{ 1, 4\}$
    \begin{itemize}
        \item $r=1$: $\tilde{S}$ is the Hirzebruch surface $\mathbb F_1$  and $\tilde{\pi}$ a $\bP^1$-bundle, 
        \item $r=4$:  $\tilde{S}$ is a del Pezzo surface of degree 5 and  $\tilde{\pi}$ a conic bundle with 3 singular fibers. 
    \end{itemize}
\end{itemize}
\end{exam}

\begin{rema}
\label{rema:Bertini-Geiser}
Let $S$ be a smooth del Pezzo $G$-surface of degree $\geqslant 2$ such that $\rk\Pic(S)^G=1$. 
Suppose that there exists a $G$-Sarkisov link 
$$
\xymatrix{
S & \ar[l]_{\sigma} \tilde{S} \ar[r]^{\sigma'} & S'}
$$
such that $\sigma$ is a blowup of a $G$-orbit $\Sigma$ with
$$
|\Sigma|\in\{K_S^2-1,K_S^2-2\},
$$
where $S'$ is a (smooth) del Pezzo $G$-surface with $\rk\Pic(S)^G=1$, 
and $\sigma'$ is a blowup of a $G$-orbit $\Sigma'$. 
Then $\tilde{S}$ is a smooth del Pezzo surface of degree $1$ or $2$, and $\mathrm{Aut}(\tilde{S})$ contains an involution $\tau$ that centralizes the action of the group $G$. 
The involution $\tau$ is known as {\em Bertini} involution (when $K_{\tilde{S}}^2=1$) or {\em Geiser} involution (when $K_{\tilde{S}}^2=2$).
This implies that $S$ and $S'$ are $G$-birational, in particular, we have $|\Sigma'|=|\Sigma|$. 
Hence, we may assume that $\sigma'\circ\beta=\gamma$ for
$$
\gamma=\sigma\circ\tau\circ\sigma^{-1}.
$$
Note that $\gamma\in\mathrm{Bir}^G(S)$, and its image $\beta(\gamma)\in\mathrm{Aut}(G)$ is trivial, because $\gamma$ centralizes $G$.
In the following, we will say that $\gamma$ is a Bertini or Geiser (birational) involution of the surface $S$, respectively.
\end{rema}

A more detailed description of two-dimensional $G$-Sarkisov links is in \cite[Section 7]{DI}. 
For instance, if $S\to \bP^1$ is a $G$-conic bundle with $\rk\Pic(S)^G=2$,  
then $7\ne (-K_S)^2\le 8$ and the following assertions hold:
\begin{itemize}
    \item if $(-K_S)^2\le 1$, then $S$ is not $G$-birational to a del Pezzo surface, see \cite[Section 8]{DI},
    \item if $(-K_S)^2=3,5,6$, then $S$ is a del Pezzo surface, 
    \item if $(-K_S)^2=8$, then $S$ is $G$-birational to a del Pezzo surface.
\end{itemize}
If $(-K_S)^2=4$, there are examples such that $S$ is a del Pezzo surface, and there are examples such that $S$ is not $G$-birational to a smooth del Pezzo surface, see \cite[Section 8]{DI}. It was an open question, posed in \cite[Section 9]{DI} whether or not $S$ is $G$-birational to a smooth del Pezzo surface when $(-K_2)^2=2$. 
Even though this is not directly related to the main topic of this paper, we include an example that answers this question: 

\begin{exam}
\label{exam:dp2-conic}
Let $\bar{S}\subset \bP(1_x,1_y,3_z,3_w)$ be given by 
$$
zw=x^6+y^6, 
$$
let $\eta\colon S\to\bar{S}$ be the minimal resolution of singularities, 
and let $G$ be the subgroup in $\Aut(\bar{S})$ generated by 
$$
\sigma_{z,w}: (z,w)\mapsto (w,z), \quad \sigma_{x,y}: (x,y)\mapsto (x,y),
$$
$$
\theta_{12}: (x,y,z,w)\mapsto (\zeta_{12}x,\zeta_{12}^{-1}y,-z,w).
$$
Then $(-K_S)^2=2$, $\rk\Pic(S)^G=2$, $G\simeq (C_2)^2.\mathfrak{D}_6$, and we have the following $G$-equivariant commutative diagram:
$$
\xymatrix{
&S\ar@{->}[dr]^{\pi}\ar@{->}[dl]_{\eta}\\%
\bar{S}\ar@{-->}[rr]^{\xi} && \mathbb{P}^1_{x,y}}
$$
where $\pi$ is a conic bundle, and $\xi$ is given by $(x,y,z,t)\mapsto(x,y)$. 
By the classification of $G$-Sarkisov links, $S$ is not $G$-birational to a smooth del Pezzo surface unless there exists the $G$-equivariant commutative diagram:
$$
\xymatrix{
S\ar@{->}[d]_{\pi}\ar@{-->}[rr]&&S^\prime\ar@{->}[d]^{\pi^\prime}\\%
\mathbb{P}^1\ar@{=}[rr]&& \mathbb{P}^1}
$$
where $S^\prime$ is a del Pezzo surface of degree $2$ with $\rk\Pic(S^\prime)^G=2$, and $\pi^\prime$ is a $G$-conic bundle.
Using the classification of actions on del Pezzo surfaces of degree $2$ in \cite{DI}, we exclude the latter possibility, so that $S$ is not $G$-birational to a smooth del Pezzo surface.
\end{exam}

Now, we suppose that $S$ is a del Pezzo surface with $\rk\Pic(S)^G=1$. Such $S$ is called:
\begin{itemize}
    \item $G$-birationally super-rigid if there are no $G$-Sarkisov links starting from $S$,
    \item $G$-birationally rigid if every $G$-Sarkisov link that starts from $S$ ends at a del Pezzo surface which is $G$-biregular to $S$, 
    \item $G$-solid if $S$ is not $G$-birational to a $G$-conic bundle.
\end{itemize}

If $S$ is $G$-birationally super-rigid then 
$\Aut^G(S)=\Bir^G(S)$, and the number of $G$-actions on $S$ with fixed image in $\Aut(S)$ is the same, whether we consider it up to conjugation in $\Aut(S)$ or in $\Bir(S)$.

\begin{exam}
\label{exam:a6}
Let $S=\bP^2$ and $G=\fA_6$. Then $S$ does not have $G$-orbits $\Sigma$ of length $|\Sigma|< K_S^2=9$. By Example~\ref{exam:sarkisov},  $S$ is $G$-birationally super-rigid.      
\end{exam}

\begin{exam}
\label{exam:a5}
Let $S=\bP^2$ and $G=\fA_5$. Then $S$ contains a {\em unique} $G$-orbit $\Sigma$ of length $|\Sigma|< K_S^2=9$, this orbit has length $6$ and the points of $\Sigma$ are in general linear position and are not contained in a conic. By Example~\ref{exam:sarkisov}, $S'=\bP^2$, and is $G$-biregular to $S$, thus $S$ is $G$-birationally rigid.      
\end{exam}

\subsection*{Burnside formalism}
\label{sect:burn}

This formalism, introduced in \cite{BnG} and applied in, e.g., \cite{HKT-small},  \cite{KT-linear}, \cite{TYZ-3}, allows to distinguish birational types of actions of finite groups in many new situations. It takes into account information from 
the {\em stabilizer stratification} on a {\em standard} birational model for the action. This is a model $X$ such that
\begin{itemize} 
\item there is a Zariski open $U\subset X$ on which the $G$-action is free, 
\item the complement $X\setminus U $ is a normal crossings divisor such that for each irreducible component $D_{\alpha}$ in $X\setminus U=\cup_{\alpha\in\cA}D_\alpha$, and all $g\in G$, we have that
$$
g(D_{\alpha})=D_{\alpha}\quad \text{ or } \quad 
g(D_{\alpha})\cap D_{\alpha} =\emptyset.
$$
\end{itemize}
On such a model, generic stabilizers of all subvarieties are abelian. 

We describe the simplest version of the general theory in the case of surfaces: given a standard model as above, a $G$-orbit of irreducible components of $X\setminus U$, and a choice of a representative $D_{\alpha}$ of this orbit, we have a symbol
$$
(H_\alpha, Z_{\alpha} \actsfromleft k(D_{\alpha}), (b_{\alpha})),
$$
where 
\begin{itemize}
    \item $H_\alpha$ is the (cyclic) generic stabilizer of $D_\alpha$,
    \item $Z_{\alpha}\subseteq \mathrm Z_G(H_{\alpha})/H_{\alpha}$, a subgroup of the centralizer modulo stabilizer, acting generically freely on $D_\alpha$, 
    and 
    \item $b_\alpha\in H_{\alpha}^\vee$ is the weight of $H_{\alpha}$ in the normal bundle of $D_{\alpha}$. 
\end{itemize}
 Such symbols are considered up to $G$-conjugation. 

A symbol is called {\em incompressible} if $D_{\alpha}$ is a curve of genus $\ge 1$, or if $Z_{\alpha}$ is not cyclic. 
The class 
$$
[X\actsfromright G]:=\sum_{\alpha\in \mathcal A/G} (H_\alpha, Z_{\alpha} \actsfromleft k(D_{\alpha}), (b_{\alpha}))
$$
in the free abelian group 
$$
\Burn_2^{\mathrm{inc}}(G),
$$
spanned by incompressible symbols (up to conjugation), is an equivariant birational invariant of the $G$-action. 

In our application to Theorem~\ref{thm:intransBurn}, we have $D_{\alpha}=\bP^1$, and $Z_{\alpha}$ is one of the 
$$
\fD_n, \quad \fA_4, \quad \fS_4, \quad \fA_5. 
$$
The following example from \cite[Section 10]{KT-linear} and \cite[Section 7]{TYZ-3} illustrates this.  

\begin{exam}
\label{exam:burn-I}
For $t\ge 2$ and odd $n\ge 5$, consider 
$G=C_t\times \mathfrak D_n$. Let $\chi$ be a primitive character of $C_t$ and $\psi$ a primitive character of $C_n\subset \mathfrak D_n$. Let $V_{\psi}$ be the 2-dimensional faithful representation of $\mathfrak D_n$ induced from $\psi$, and $V_{\chi,\psi}=\chi\otimes V_{\psi}$. Then $G$ acts generically freely on 
$$
    \bP^2=\bP(V)=\bP(\mathbf{1}\oplus V_{\chi,\psi}).
    $$
The class 
$$
[\bP(V)\actsfromright G]\in \Burn_2^{\mathrm{inc}}(G)
$$
equals 
$$
(C_{t}, \mathfrak D_{n}\actsfromleft k(\bP^1), (\chi))
+ (C_{t}, \mathfrak D_{n}\actsfromleft k(\bP^1), (-\chi)).
$$
In particular, if $\chi\neq \pm \chi'$ or $\psi\neq \pm \psi'$, the corresponding actions on $\bP(V)$ and $\bP(V')$ are not birational. 
\end{exam}

\section{Classification of finite subgroups in $\PGL_3$}
\label{sect:plane}

Finite subgroups $G\subset \PGL_3$ have been classified by Blichfeldt \cite{blichfeldt}. These arise via projectivizations $\bP(V)$ of
faithful 3-dimensional representations $V$ of central  extensions  $\tilde{G}$ of $G$.
We recall the relevant terminology: a $G$-action
on $\bP^2=\bP(V)$ is called:
\begin{itemize}
\item[{\bf (I)}] {\em intransitive}: if $G$ fixes a point in $\bP^2$;
\item[{\bf (T)}] {\em transitive but imprimitive}: if $G$ does not fix a point in $\bP^2$ but there exists a $G$-orbit of length $3$;
\item[{\bf (P)}] {\em primitive}: neither of the above.
\end{itemize}

\

The following groups give rise to primitive actions:
$$
\fA_5, \quad \fA_6,\quad  \mathsf{PSL}_2(\bF_7),\quad  \mathsf{ASL}_2(\bF_3), \quad \mathsf{PSU}_3(\bF_2), \quad C_3^2\rtimes C_4.
$$
Each of these is unique in $\PGL_3$, up to conjugation. Explicit generators are given in \cite{Yau-Yu}.
The group $\mathsf{ASL}_2(\bF_3)$ is known as the {\em Hessian group} ---
after a suitable coordinate change, it leaves invariant the Hesse pencil:
$$
x^3+y^3+z^3+\lambda xyz=0, \quad \lambda\in\mathbb{P}^1,
$$
in $\mathbb{P}^2_{x,y,z}$. The action on the curves of this pencil gives an exact sequence of groups
$$
1\to C_3^2\rtimes C_2\to \mathsf{ASL}_2(\bF_3)\to\mathfrak{A}_4\to 1,
$$
where $C_3^2\rtimes C_2\subset \PGL_3$ is the subgroup that leaves invariant a general curve in the pencil,
and $C_3^2\subset C_3^2\rtimes C_2$ acts on a general elliptic curve in the pencil via translations by $3$-torsions.
This subgroup is transitive but not primitive, and $\mathsf{ASL}_2(\bF_3)$ can also be defined as its normalizer in $\PGL_3$.
Up to conjugation, the three primitive subgroups $C_3^2\rtimes C_4$, $\mathsf{PSU}_3(\bF_2)$, $\mathsf{ASL}_2(\bF_3)$ are nested as follows:
$$
C_3^2\rtimes C_4\subset\mathsf{PSU}_3(\bF_2)\subset\mathsf{ASL}_2(\bF_3).
$$

We describe groups giving rise to actions of type {\bf (T)}. In this case, $G$ has an orbit of length 3, consisting of points in linear general position. Changing coordinates, we may assume that these points are
$$
[1:0,0], \quad [0:1:0], \quad  [0:0:1].
$$
The $G$-action on these points gives rise to the exact sequence
$$
1\to T\to G\stackrel{\nu}{\lra} \fS_3,
$$
which turns out to be split. The kernel of $\nu$ consists of diagonal automorphisms.
The image is either $C_3$ or $\fS_3$.

\begin{prop}[{\cite[Theorem 4.7]{DI}}]
    \label{prop:trans}
Let 
$G\subset \PGL_3\simeq \Aut(\bP^2)$ be a finite subgroup giving rise to an action of type {\bf (T)}. Then,
up to conjugation in $\PGL_3$, one  of the following holds:
\begin{itemize}
\item $G\simeq C_n^2\rtimes C_3$ is generated by 
$$
\mathrm{diag}(\zeta_{n},1,1),\quad  \sigma_{123}.
$$  
    \item  $G\simeq C_n^2\rtimes \fS_3$ is generated by
    $$
\mathrm{diag}(\zeta_n, 1, 1), \quad \sigma_{123},\quad  \sigma_{12},
    $$
     \item 
        $G\simeq (C_n\times C_{n/r})\rtimes C_3$ is generated by
 $$
\mathrm{diag}(\zeta_n^r, 1, 1), \quad \mathrm{diag}(\zeta_n^s, \zeta_n, 1), \quad \sigma_{123},
    $$
    where $r\ge 1$, $r\mid n$, and $s^2-s+1\equiv 0 \pmod r$,
\item  $G\simeq (C_n\times C_{n/3})\rtimes \fS_3$ is generated by
    $$
\mathrm{diag}(\zeta_n^3, 1, 1), \quad \mathrm{diag}(\zeta_n^2, \zeta_n, 1), \quad \sigma_{123},\quad  \sigma_{12},
    $$
    with $3\mid n$,   
    \end{itemize}
where
$$
\sigma_{123}: (x,y,z)\to (y,z,x), \quad  \sigma_{12}: (x,y,z)\to (y,x,z),
$$
on $\bP^2_{x,y,z}$. Each of these subgroups is unique up to conjugation in $\PGL_3$.
\end{prop}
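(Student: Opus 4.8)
Here is the strategy I would follow.

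\smallskip

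The plan is to reduce everything to a lattice problem over the Eisenstein integers and solve it arithmetically. Using the discussion preceding the statement, I would first fix coordinates so that the distinguished orbit of three points in general position is $\{[1:0:0],[0:1:0],[0:0:1]\}$. Then $G$ lies inside the monomial group $\bar D\rtimes\fS_3$, the normalizer in $\PGL_3$ of the diagonal torus $\bar D$, and one has the split exact sequence $1\to T\to G\to\nu(G)\to 1$ with $T=G\cap\bar D$. Since $G$ fixes no point, $\nu(G)$ must act transitively on the three coordinate points, so $\nu(G)$ is $C_3$ or $\fS_3$, and $T$ is invariant under (a lift of) $\nu(G)$. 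Thus the problem becomes: classify, up to conjugation by $\bar D\rtimes\fS_3$, the finite subgroups $T\subseteq\bar D$ that are $C_3$-invariant, respectively $\fS_3$-invariant (and, in the end, genuinely of type {\bf (T)}, which excludes a few small-parameter cases where $G$ would fix a point).

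\smallskip

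Next I would dualize. Let $M=\bZ^3/\bZ(1,1,1)$ be the character lattice of $\bar D$; finite subgroups $T\subseteq\bar D$ correspond to finite-index sublattices $N\subseteq M$ (with $T^\vee=M/N$), and $T$ is $\nu(G)$-invariant iff $N$ is. A $3$-cycle acts on $M\simeq\bZ^2$ with order $3$ and no nonzero fixed vector, turning $M$ into a rank-one module over $\bZ[\zeta_3]$; as $\bZ[\zeta_3]$ is a PID, $M\simeq\bZ[\zeta_3]$. Hence the $C_3$-invariant $N$ are exactly the nonzero ideals $(\delta)$, and $T\simeq\bZ[\zeta_3]/(\delta)$ as abelian groups. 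Conjugation by $\bar D$ fixes $T$; conjugation by the Weyl group $\fS_3$ multiplies $\delta$ by a unit or replaces it by $\bar\delta$. Writing $\delta=b\delta'$ with $\delta'=x'+y'\zeta_3$ primitive, the Smith normal form of multiplication by $\delta$ gives $T\simeq C_n\times C_{n/r}$ with $n=b\,N(\delta')$ and $r=N(\delta')=x'^2-x'y'+y'^2\mid n$, and $s:=x'(y')^{-1}\bmod r$ satisfies $s^2-s+1\equiv 0\pmod r$. Conversely, that congruence is precisely the classical condition for $r$ to be represented by $x^2-xy+y^2$ on coprime pairs, so every admissible $(n,r,s)$ occurs. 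A direct computation then identifies $\langle\mathrm{diag}(\zeta_n^r,1,1),\mathrm{diag}(\zeta_n^s,\zeta_n,1),\sigma_{123}\rangle$ with the corresponding $G$ and shows $\sigma_{123}$-invariance of $T$ is equivalent to $r\mid s^2-s+1$; this yields the third family, the first being its $r=1$ specialization.

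\smallskip

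For type $\fS_3$ I would impose, in addition, invariance under a transposition $\sigma_{12}$. Since $\sigma_{12}$ conjugates the $3$-cycle to its inverse, it acts on $M\simeq\bZ[\zeta_3]$ conjugate-linearly, i.e. as $m\mapsto u\bar m$ with $u$ a unit of norm $1$; hence $N=(\delta)$ is $\sigma_{12}$-invariant iff $(\delta)=(\bar\delta)$. Running through the six cases $\delta/\bar\delta\in\bZ[\zeta_3]^\times$ shows that, up to units, either $\delta\in\bZ$, giving $T\simeq C_n^2$ (so $r=1$), or $\delta$ is a rational multiple of $\sqrt{-3}=2\zeta_3+1$, giving $T\simeq C_n\times C_{n/3}$ with $3\mid n$ and $s\equiv 2\pmod 3$. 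These are exactly the two $\fS_3$-families, realized by the stated generators.

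\smallskip

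Finally, for uniqueness up to conjugation in $\PGL_3$, I would argue that any $\varphi\in\PGL_3$ conjugating one type-{\bf(T)} group to another must carry the distinguished triple of independent points of the first (canonically the points of the isotypic decomposition of the associated linear action) to that of the second, hence may be taken monomial; the remaining monomial freedom is precisely what was used above to normalize $\delta$, so same family and same parameters implies conjugate, while $(n,r,s)$ are recovered as conjugation invariants (elementary divisors of $T$, together with the ideal $(\delta)$ modulo units and conjugation). The main obstacle is the arithmetic bookkeeping of the two middle steps — matching the lattice datum $(\delta)$ precisely to the normal forms $(n,r,s)$, verifying that the congruence conditions coincide with the representability conditions for $x^2-xy+y^2$, and ruling out coincidences between the four families (equivalently, between parameter values). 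A secondary point needing care is the asserted splitting of $1\to T\to G\to\nu(G)\to1$: over $C_3$ it is automatic, because any monomial lift of a $3$-cycle already has order $3$ in $\PGL_3$, whereas over the transposition it has to be extracted from the explicit description of $T$ (the relevant class in $H^2(\fS_3,T)$ vanishes).
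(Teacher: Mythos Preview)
Your proposal is correct but takes a much more elaborate route than the paper. The paper's proof is two lines: it cites \cite{DI} for the classification itself, and then only addresses the residual uniqueness question, namely that in the third family the two solutions $s_1,s_2$ of $s^2-s+1\equiv 0\pmod r$ (which satisfy $s_1+s_2\equiv 1\pmod r$) give conjugate subgroups. This is done by a direct computation: conjugation by $\sigma_{12}$ sends $\mathrm{diag}(\zeta_n^{s_1},\zeta_n,1)$ to $\mathrm{diag}(\zeta_n^{s_1},1,\zeta_n)$, which after multiplying by a power of $\mathrm{diag}(\zeta_n^r,1,1)$ becomes $\mathrm{diag}(\zeta_n^{s_2},\zeta_n,1)$, so $\sigma_{12}G_1\sigma_{12}=G_2$.

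Your approach, by contrast, reproves the classification from scratch by identifying the character lattice $M\simeq\bZ[\zeta_3]$ as a module over the Eisenstein integers and reducing to ideal theory in a PID. This is conceptually cleaner and explains \emph{why} the congruence $s^2-s+1\equiv 0\pmod r$ appears (it is the representability of $r$ by the norm form), and your $\delta\leftrightarrow\bar\delta$ symmetry under the Weyl group is exactly the paper's $\sigma_{12}$-conjugation written structurally. The cost is that you must also handle the bookkeeping the paper outsources: matching $(\delta)$ to $(n,r,s)$ precisely, checking splitting over $\fS_3$, and the canonicity of the distinguished triple. On the last point, be aware that the system of imprimitivity is \emph{not} always unique --- for $G\simeq C_3^2$ or $C_3\rtimes\fS_3$ there are four $G$-orbits of length $3$ (as noted later in the paper), so your ``canonically the points of the isotypic decomposition'' does not quite work as stated; one needs instead that $\mathrm{Aut}^G(\bP^2)$ acts transitively on the systems of imprimitivity in those cases.
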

\begin{proof}
The classification is achieved in \cite{DI}. To show the uniqueness of each subgroup in $\PGL_3$, it suffices to show that the two choices of $s$ give rise to the same subgroup in the third case $G\simeq (C_n\times C_{n/r})\rtimes C_3$.

    Let  $G_1, G_2$ be groups corresponding to different choices of roots 
$s_1, s_2$, for fixed $n, r$, in the third case. Since $s_1+s_2=1\pmod r$, we have 
$$
\sigma_{12}\cdot\mathrm{diag}(\zeta_n^{s_1}, \zeta_n, 1)\cdot\sigma_{12}=\mathrm{diag}(\zeta_n^{s_1}, 1,\zeta_n).
$$
Up to multiplying by powers of $\mathrm{diag}(\zeta_n^r,1,1)$, we have 
$$
\mathrm{diag}(\zeta_n^{s_2},\zeta_n,1)\in \sigma_{12}G_1\sigma_{12}.
$$
It follows that 
$$
\sigma_{12}\cdot G_1\cdot\sigma_{12}=G_2.
$$
\end{proof}

We turn to actions of type {\bf (I)},
following \cite[Section 4.2]{DI}.
The existence of a fixed point implies that we have an embedding $G\hookrightarrow \GL_2$,
inducing an action on $\bP^1$, via the projection $\GL_2\to \PGL_2$; we denote by $\bar{G}$ the image of $G$, and by $C_m$ its kernel, a cyclic subgroup of order $m$.
We write 
$$
\chi_r : (x_1,x_2,x_3)\mapsto (x_1,\zeta_rx_2,\zeta_rx_3),
$$
for a primitive root of unity $\zeta_r$ of order $r\geq 1$. We have:

\begin{prop}[{\cite{GL2C}}]
    \label{prop:intrans}
Let $G\subset \PGL_3\simeq \Aut(\bP^2)$ be a finite subgroup giving rise to an action of type {\bf (I)}. Then,
up to conjugation in $\PGL_3$, one  of the following holds:
\begin{itemize}[wide, labelwidth=!, labelindent=0pt]
  \item $\bar G\simeq C_n$. $G$ is generated by 
$$
\chi_r, (x_1,\zeta_n^rx_2,\zeta_n^{m}x_3), \quad r,m\in\bZ_{\geq 0}.
$$

    \item $\bar G\simeq \fD_n$:
    \begin{itemize}
    \item
    $n$ odd. $G$ is generated by
    $$
\chi_r,(x_1,\zeta_nx_2,\zeta_n^{-1}x_3),(x_1,\zeta_{2^m}x_3,\zeta_{2^m}x_2),\quad r,m\in\bZ_{\geq0}.
$$
\item $n$ even. $G$ is generated by
    $$
\chi_r,(x_1,\zeta_{2n}x_2,\zeta_{2n}^{-1}x_3),(x_1,\zeta_{4}x_3,\zeta_{4}x_2),\quad r\in\bZ_{\geq0}.
$$ 
\item
   $n$ even. $G$ is generated by
    $$
\chi_r,(x_1,\zeta_{2^{m+1}}\zeta_{2n}x_2,\zeta_{2^{m+1}}\zeta_{2n}^{-1}x_3), (x_1,\zeta_{4}x_3,\zeta_{4}x_2),\quad r\in\bZ_{\geq 0},m\in\bZ_{\geq 1}.
$$
\item
   $n$ even,
   $n\ne 2$. $G$ is generated by
$$
\chi_r,(x_1,\zeta_{2n}x_2,\zeta_{2n}^{-1}x_3), (x_1,\zeta_{2^{m+1}}\zeta_{4}x_3,\zeta_{2^{m+1}}\zeta_{4}x_2),\quad r\in\bZ_{\geq 0},m\in\bZ_{\geq 1}.
$$
\end{itemize}
\item $\bar G\simeq \fA_4$.
\begin{itemize}
\item
$G$ is generated by
    $$
\chi_r,(x_1,\zeta_3^2x_2,\zeta_3x_3-x_2),(x_1,x_3,-x_2),\quad r\in\bZ_{\geq0},
$$
\item or  $G$
 is generated by
    $$
\chi_r,(x_1,\zeta_{3^{m+1}}\zeta_3^2x_2,\zeta_{3^{m+1}}(\zeta_3x_3-x_2)),(x_1,x_3,-x_2),\quad r,m\in\bZ_{\geq0}.
$$
\end{itemize}
 \item $\bar G\simeq \fS_4$.
 $G$ is generated by
    $$
\chi_r, (x_1,\zeta_{2^{m+1}}\zeta_{4}(-2x_2+s_1x_3),\zeta_{2^{m+1}}\zeta_4(s_2x_2+2x_3)),
$$
$$
(x_1,-s_2x_2-x_3,s_1x_2+(s_1+1)x_3),
$$
where
$$
s_1=\zeta_8^3+\zeta_8-1, \quad s_2=\zeta_8^3+\zeta_8+1, \quad
r,m\in\bZ_{\geq0}.
$$

\item $\bar G\simeq \fA_5$. $G$ is generated by
    $$
\chi_r,(x_1,(\zeta_{5}^3+\zeta_5^4)x_2-(\zeta_5^4+1)x_3,\zeta_5^3x_2+(\zeta_5^2+\zeta_5)x_3),(x_1,\zeta_5^2x_3,-\zeta_5^3x_2),
$$
for $r\in\bZ_{\geq0}.$
\end{itemize}
Moreover, different subgroups in the list are not conjugated in $\PGL_3$.
\end{prop}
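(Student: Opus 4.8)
The plan is to reduce the statement to the classical classification of finite subgroups of $\GL_2(\mathbb{C})$ and then to repackage that classification as a list of normal forms inside $\PGL_3$.

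\textbf{Step 1: reduction to $\GL_2$.} An intransitive $G\subseteq\PGL_3$ fixes a point, which we place at $[1:0:0]$. Choosing a finite lift $\tilde G\subseteq\GL_3$ and decomposing the underlying $3$-dimensional representation as $\langle e_1\rangle\oplus W$, we may twist $\tilde G$ by the inverse of the character through which it acts on $\langle e_1\rangle$: this does not change the image in $\PGL_3$ and places the twisted group inside $\{1\}\times\GL(W)\cong\GL_2$, where the map to $\PGL_3$ is injective. Hence $G$ itself embeds in $\GL_2$, and conversely every finite $G\subseteq\GL_2$, embedded as $1\oplus G$, yields an intransitive subgroup of $\PGL_3$. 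The only surviving ambiguity is that two finite subgroups of $\GL_2$ give $\PGL_3$-conjugate subgroups precisely when they differ by a conjugation in $\GL_2$ followed by a twist by a character $G\to\mathbb{G}_m$ (a common rescaling of $W$ against $\langle e_1\rangle$), together with a possible permutation of fixed points in the degenerate diagonal cases. So I would classify finite subgroups of $\GL_2(\mathbb{C})$ up to conjugacy and character twist, then transcribe generators into $\PGL_3$-coordinates.

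\textbf{Step 2: the structure of $\bar G$.} Let $p\colon\GL_2\to\PGL_2$, put $\bar G:=p(G)$ and $C_m:=\Ker(G\to\bar G)=G\cap\{\text{scalars}\}$, a cyclic group of order $m$ --- exactly the part of $G$ visible through the $\chi_r$-type elements. By the Klein classification, $\bar G$ is one of $C_n,\ \mathfrak{D}_n,\ \mathfrak{A}_4,\ \mathfrak{S}_4,\ \mathfrak{A}_5$, each unique up to conjugacy in $\PGL_2$, and I would fix a standard representative. Then $G$ is a finite subgroup of $p^{-1}(\bar G)=\mathbb{G}_m\cdot\widetilde{\bar{G}}$ surjecting onto $\bar G$, where $\widetilde{\bar{G}}\subseteq\SL_2$ is the corresponding binary group (cyclic; binary dihedral; $\SL_2(\mathbb{F}_3)$; binary octahedral; $\SL_2(\mathbb{F}_5)$). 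Such a $G$ is determined by the scalar subgroup $C_m$ together with choices of lifts of generators of $\bar G$ modulo $C_m$; enumerating these possibilities and sorting them by the orders and $2$-adic (resp. $3$-adic) valuations of the relevant cyclic groups produces the cases in the statement. When $\bar G=C_n$ the extension of $\bar G$ by the central $C_m$ is abelian, so $G$ is diagonalizable and one gets the first family after diagonalizing; the dihedral case breaks into four sub-cases according to the parity of $n$ and the order of a lifted reflection (with degenerate exclusions such as $n\neq 2$); and the three polyhedral cases are controlled by the Schur multipliers of $\mathfrak{A}_4,\mathfrak{S}_4,\mathfrak{A}_5$ and their $2$-dimensional faithful representations, which is what forces $\zeta_3$, $\zeta_8$ and the constants $s_1,s_2$ into the generators.

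\textbf{Step 3: normal forms and distinctness.} In each case I would conjugate in $\GL_2$ to bring $\bar G$ to its standard matrix form on $W$, then use the character-twist freedom --- equivalently, common rescalings of $x_2,x_3$ by roots of unity compatible with $G$ --- to normalize the remaining scalars, arriving at the displayed generators with parameters $r$ (the $\chi_r$-part), $m$, and the extra $2$- or $3$-power parameters in the dihedral and polyhedral cases. For the ``moreover'' assertion, a $\PGL_3$-conjugacy between two entries carries the distinguished fixed point of one to that of the other --- this point is unique when $G$ has no invariant line in $W$, and in the remaining essentially diagonal cases one argues directly with the full unordered eigenvalue data --- so it restricts to a conjugacy-and-twist of the $W$-pictures; one then separates the entries by conjugacy-invariant data: the abstract isomorphism type of $G$, the $\PGL_2$-conjugacy type of $\bar G$ (in particular the value of $n$), the order $m$ of $C_m$, and the remaining discrete parameters recovered from the multiset of eigenvalue ratios and character values, and checks that the stated parameter ranges never overlap.

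\textbf{The main obstacle.} The genuine difficulty is the completeness-and-non-redundancy bookkeeping in Step 2: verifying that every finite subgroup of $\GL_2$ is conjugacy-plus-twist-equivalent to \emph{exactly one} entry on the list. The dihedral branch, which splits into four sub-families governed by $2$-adic valuations with delicate boundary conditions (the ``$n\neq 2$'' exclusion, the ranges $m\in\mathbb{Z}_{\geq 1}$ versus $m\in\mathbb{Z}_{\geq 0}$), and the $\mathfrak{S}_4$ branch, where one must pin down exactly how the binary octahedral group sits in $\GL_2$ and which scalars occur, are where the normalization must be carried out most carefully; the structural reductions above are comparatively routine.
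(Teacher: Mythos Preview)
The paper does not prove this proposition; it is quoted from the reference \cite{GL2C} on finite subgroups of $\GL_2(\mathbb{C})$, and no argument is supplied. Your outline is exactly the standard strategy such a reference would follow --- reduce to $\GL_2$ via the fixed point and its invariant complementary line, invoke Klein's classification of finite subgroups of $\PGL_2$, enumerate the finite preimages in $\GL_2$ case by case, and normalize --- so you are in effect reconstructing what the paper outsources.

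One inaccuracy worth flagging: in Step~1 you say that two finite subgroups of $\GL_2$ become $\PGL_3$-conjugate when they differ by $\GL_2$-conjugacy \emph{and} a character twist. The character twist is spurious. The embedding $\GL_2\hookrightarrow\PGL_3$, $M\mapsto[\mathrm{diag}(1,M)]$, is injective, with image the common stabilizer of $[1{:}0{:}0]$ and the line $\{x_1=0\}$; once that point and line are fixed, the $\GL_2$-subgroup is determined on the nose, and replacing $H$ by $\{\chi(h)h:h\in H\}$ for a nontrivial character $\chi$ produces a genuinely different (and in general non-conjugate) subgroup of $\PGL_3$. When $\bar G\ne C_n$ the $2$-dimensional piece $W$ is irreducible, which forces $[1{:}0{:}0]$ to be the \emph{unique} $G$-fixed point and $\{x_1=0\}$ the \emph{unique} $G$-invariant line; hence any $\PGL_3$-conjugacy already lies in $\GL_2$, and the correct equivalence is plain $\GL_2$-conjugacy. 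Only when $\bar G=C_n$ is $W$ reducible; then $G$ is abelian and diagonalizable, and the extra identifications come from the $\mathfrak{S}_3$ permuting the three coordinate fixed points --- which you already mention. Drop the twist and your reduction is clean; the genuine labor is, as you correctly identify, the $2$-adic and $3$-adic bookkeeping in the dihedral and polyhedral families.
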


From this description, we obtain:

\begin{coro}
\label{coro:nice}
If a group $G$ gives rise to actions of different classes among {\bf (I)}, {\bf (T)}, and {\bf (P)}, then $G\simeq C_3^2$, the actions are of type {\bf (I)} and {\bf (T)}, and
there are two subgroups in $\PGL_3$ isomorphic to $G$, up to conjugation, generated by 
 \begin{align*}
           \langle\mathrm{diag}(1,\zeta_3,1),\mathrm{diag}(1,1,\zeta_3)\rangle\quad \text{of Type {\bf (I)}},\\ \langle\mathrm{diag}(1,\zeta_3,\zeta_3^2),(x_3,x_1,x_2)\rangle\quad \text{of Type {\bf (T)}}.
        \end{align*}

\end{coro}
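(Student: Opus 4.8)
The plan is to extract the statement directly from the explicit classifications in Propositions~\ref{prop:trans} and \ref{prop:intrans}, together with the definitions of the three types. First I would settle which abstract groups $G$ can occur simultaneously in two of the three lists. A group admitting a type {\bf (P)} action is one of the six primitive groups $\fA_5, \fA_6, \mathsf{PSL}_2(\bF_7), \mathsf{ASL}_2(\bF_3), \mathsf{PSU}_3(\bF_2), C_3^2\rtimes C_4$; I would check, using the classifications, that none of these is isomorphic to a group on the type {\bf (T)} list (Proposition~\ref{prop:trans}) or the type {\bf (I)} list (Proposition~\ref{prop:intrans}). For instance, the type {\bf (I)} groups all have a normal cyclic subgroup $C_m$ with cyclic-or-dihedral-or-$\fA_4$-or-$\fS_4$-or-$\fA_5$ quotient and a fixed point on $\bP^2$, while $\fA_6$, $\mathsf{PSL}_2(\bF_7)$, $\mathsf{PSU}_3(\bF_2)$, $\mathsf{ASL}_2(\bF_3)$ are simple or have structure incompatible with this, and $\fA_5$ is already treated (it does appear on the type {\bf (I)} list as $\bar G$, but as an abstract group occurring with a fixed point it has $m=1$, whereas as a primitive group it has none — so "the group $\fA_5$" occurs in both types {\bf (I)} and {\bf (P)}: I must handle this case carefully, see below). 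This reduces the problem to comparing types {\bf (I)} and {\bf (T)}.

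Next I would compare the type {\bf (T)} list with the type {\bf (I)} list as lists of abstract groups. The type {\bf (T)} groups all have shape $A\rtimes C_3$ or $A\rtimes\fS_3$ with $A$ abelian of rank $\le 2$; a subgroup is of type {\bf (T)} precisely when it has an orbit of length $3$ in general position but no fixed point. The type {\bf (I)} groups have a fixed point. So I would look for abstract isomorphisms between a group on one list and a group on the other. The smallest interesting case is $A\simeq C_3^2$ with trivial extension: $C_3^2$ itself, which in Proposition~\ref{prop:trans} (first bullet with $n=3$) is the type {\bf (T)} group $C_3^2\rtimes C_3$? — no, I should be careful: $C_3^2$ as an abstract abelian group. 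The point is that $C_3^2$ can be embedded in $\PGL_3$ as a group of diagonal matrices (type {\bf (I)}, fixing each coordinate point, in fact fixing a point) and also as $\langle\mathrm{diag}(1,\zeta_3,\zeta_3^2),\sigma_{123}\rangle$ (type {\bf (T)}, the Heisenberg-type embedding permuting the three coordinate points cyclically with no fixed point). I would verify both embeddings are genuinely of the claimed types and appear in (or are conjugate to members of) the respective classification lists, and that for every other abstract group no such coincidence occurs — this is the bulk of the verification, done by scanning orders and group structures: abelian type {\bf (I)} groups are those with $\bar G$ cyclic, and an abelian group $C_a\times C_b$ embeds in $\PGL_3$ in type {\bf (T)} only as $C_3^2$ (via Proposition~\ref{prop:trans}, third bullet forces the $\fS_3$ or $C_3$ action to be present, so abelianness forces $C_3$ acting trivially, i.e. exactly the $C_3^2$ case), and no nonabelian type {\bf (T)} group with nontrivial $C_3$- or $\fS_3$-action is isomorphic to any type {\bf (I)} group by comparing Sylow structure and the normal-cyclic-with-small-quotient shape of type {\bf (I)}.

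Finally I would address the stray case of $\fA_5$. Although $\fA_5$ occurs in Proposition~\ref{prop:intrans} as a possible image $\bar G$, the corresponding subgroup $G\subset\PGL_3$ of type {\bf (I)} has a genuine fixed point and the primitive $\fA_5\subset\PGL_3$ does not; as abstract groups these are both $\fA_5$, so strictly speaking $\fA_5$ "gives rise to actions of different classes". I would resolve this by noting that the type {\bf (I)} realization of $\fA_5$ requires a nontrivial central extension or the splitting $C_m\times\fA_5$ with $\fA_5\to\PGL_2$ lifting to $\GL_2$ — but $\fA_5$ has no faithful $2$-dimensional representation (its smallest faithful one is $3$-dimensional), so with $m=1$ there is in fact \emph{no} type {\bf (I)} embedding of $\fA_5$ into $\PGL_3$; one needs $m\ge 2$, giving $C_m\times\fA_5\not\simeq\fA_5$. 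Thus at the level of abstract groups $\fA_5$ itself appears only in type {\bf (P)}, and the only true coincidence is $C_3^2$. The main obstacle is precisely this bookkeeping: being certain that the abstract-group lists underlying Propositions~\ref{prop:trans} and \ref{prop:intrans} intersect exactly in $C_3^2$ and that the two $\PGL_3$-conjugacy classes named in the corollary are the complete set of $C_3^2$-embeddings — the latter following because type {\bf (I)} diagonal $C_3^2$'s are all $\PGL_3$-conjugate (Proposition~\ref{prop:intrans}, $\bar G\simeq C_n$ case) and the type {\bf (T)} $C_3^2$ is unique up to conjugation by Proposition~\ref{prop:trans}.
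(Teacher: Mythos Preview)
Your approach differs substantially from the paper's. The paper's proof pivots entirely on the center $Z(G)$: for nonabelian $G$ of type {\bf (I)} one shows $G/Z(G)$ is a noncyclic subgroup of $\PGL_2$ and $Z(G)\supset C_2$ except when $G=C_r\times\fD_n$ with $r,n$ odd; for type {\bf (T)} one checks $Z(G)\in\{1,C_3,C_3^2\}$; for type {\bf (P)} one has $Z(G)=1$ except for $C_3^2\rtimes C_4$, where $Z(G)=C_2$ and $G/Z(G)\simeq C_3^2\rtimes C_2$ is not a subgroup of $\PGL_2$. These constraints are nearly orthogonal across the three types, and a short comparison immediately rules out {\bf (P)} and then forces any simultaneous {\bf (I)}/{\bf (T)} group to be abelian with $|G|$ a power of $3$, hence $C_3^2$. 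This is considerably cleaner than scanning the two classification lists against each other.

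Your plan is workable in principle but has genuine holes as written. In your {\bf (P)}-elimination paragraph you name five of the six primitive groups but never address $C_3^2\rtimes C_4$, and you give no argument whatsoever for why the primitive groups avoid the type {\bf (T)} list --- only the {\bf (I)} comparison is sketched. For the nonabelian {\bf (I)} vs {\bf (T)} comparison, ``comparing Sylow structure and the normal-cyclic-with-small-quotient shape'' is an assertion, not an argument; making it rigorous requires showing that no nonabelian type {\bf (T)} group has a normal cyclic subgroup with quotient in $\{\fD_n,\fA_4,\fS_4,\fA_5\}$, which is essentially the center computation in disguise (and is where the exceptional case $C_r\times\fD_n$, $r,n$ odd, must be confronted). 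Your treatment of $\fA_5$ --- no faithful $2$-dimensional representation, hence the minimal type {\bf (I)} lift with $\bar G=\fA_5$ is $\SL_2(\bF_5)$, not $\fA_5$ --- is correct and well observed, and your reduction of the abelian case to $C_3^2$ is right in outline.
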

\begin{proof}
    We start by describing the center $Z(G)$ if $G$ is in class 
    \begin{itemize}
        \item [{\bf (I)}:] if $G$ is nonabelian, the quotient $G/Z(G)$ is a noncyclic subgroup of $\PGL_2$, and $Z(G)\supset C_2$ unless $G=C_r\times\fD_n$, with $n,r$ odd.
        \item [{\bf (T)}:] the center $Z(G)=1, C_3$, or $C_3^2$. 
        \item[{\bf (P)}:] the center $Z(G)=1$, unless $G=C_3^2\rtimes C_4$, where $Z(G)=C_2$ and $G/Z(G)=C_3\rtimes \fS_3$.
    \end{itemize}
We conclude that no group giving rise to actions in type {\bf (P)} also gives rise to an action of type {\bf (I)} or {\bf (T)}. Assume that $G$ gives rise to actions of both types {\bf (I)} and {\bf (T)}. If $G$ is nonabelian, we know that $G=C_3\times \fD_n$ with $n$ odd. But no group giving rise to actions of type {\bf (T)} is isomorphic to this group. It follows that $G$ is abelian and $G=Z(G)$. So $|G|$ is a power of 3. It is not hard to see that the only possibility is $G=C_3^2$ and it has two conjugacy classes in $\PGL_3$, as is explained above.
\end{proof}

\begin{rema} 
Finite isomorphic subgroups of $\PGL_2\simeq\Aut(\bP^1)$ are conjugated, see, e.g.,  \cite[Proposition 4.1]{beau-sub}. Corollary~\ref{coro:nice} shows that this does not hold for $\PGL_3$. 
\end{rema}

We proceed to give a description of conjugacy classes of finite subgroups in $\PGL_3$.

\begin{coro}\label{coro:conjugatePGL3}
    Let $G_1$ and $G_2$ be isomorphic finite nonabelian subgroups of $\PGL_3$. Then $G_1$ and $G_2$ are conjugated in $\PGL_3$ unless
        \begin{align*}
            G_1=\langle\mathrm{diag}(\zeta_{n},1,1),(x_1,\zeta_3^2x_2,\zeta_3x_3-x_2),(x_1,x_3,-x_2)\rangle,\\
            G_2=\langle\mathrm{diag}(\zeta_{n},1,1),(x_1,x_2,\zeta_3^2x_3-\zeta_3x_2),(x_1,x_3,-x_2)\rangle,
        \end{align*}
    for $n\geq 1$. In this case, $G_1$ and $G_2$ are not conjugated in $\PGL_3$, and 
    $$
    G_1\simeq G_2\simeq \begin{cases}
        C_n\rtimes \fA_4&\text{when $n$ is even,} \\
        C_{2n}\rtimes\fA_4&\text{when $n$ is odd}.\\
    \end{cases}
    $$
\end{coro}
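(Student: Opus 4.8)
The plan is to leverage the classification already established in Proposition~\ref{prop:intrans} together with a careful bookkeeping of when two entries in that list actually produce isomorphic abstract groups. First I would observe that, by Proposition~\ref{prop:intrans}, every finite nonabelian intransitive subgroup $G\subset\PGL_3$ is conjugate to one explicitly listed group, and that list is organized by the image $\bar G\subset\PGL_2$. So if $G_1,G_2\subset\PGL_3$ are isomorphic nonabelian subgroups that are \emph{not} conjugate in $\PGL_3$, they must correspond to two genuinely different entries of the list that nonetheless yield isomorphic abstract groups. The task then reduces to: (i) for each isomorphism type $\bar G\in\{C_n,\fD_n,\fA_4,\fS_4,\fA_5\}$, determine which of the displayed one-parameter families can coincide as abstract groups, and (ii) check that the only such coincidence is the stated $\fA_4$-case, and that there the two groups are indeed non-conjugate in $\PGL_3$.

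For step (i), I would go type by type. The key structural invariant is the extension $1\to C_m\to G\to \bar G\to 1$ together with the induced action on the central cyclic $C_m$ coming from $\chi_r$ and the "diagonal" part; for most entries in the list, the parameter $r$ records the order of $Z_G$-type data that is an isomorphism invariant, and $m$ (the kernel of $G\to\PGL_2$) is determined by the group structure. The families with $\bar G\simeq\fD_n$ are distinguished, as the proposition asserts, by the conjugacy invariants of the two-torsion/reflection part, which are reflected in whether certain elements square to central elements — and these invariants survive abstract isomorphism because they are read off $Z(G)$ and the commutator structure, so no coincidences occur there. The genuinely interesting case is $\bar G\simeq\fA_4$, where the two families in Proposition~\ref{prop:intrans} differ by whether the lift of the $3$-cycle has order $3$ or $3^{m+1}$; here I would show that when the $3^{m+1}$-family has its parameter specialize so as to produce the \emph{same} abstract group as a member of the first family — which happens precisely when the "extra" $3$-power gets absorbed and $G\cong C_n\rtimes\fA_4$ or $C_{2n}\rtimes\fA_4$ as in the statement — one obtains the two presentations $G_1,G_2$ written down. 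Concretely, $G_2$ is obtained from $G_1$ by applying the outer automorphism of $\fA_4$ (swapping the two conjugacy classes of $3$-cycles, realized by a transposition in $\fS_4\not\subset\PGL_3$ here), which is exactly why they become non-conjugate in $\PGL_3$ but remain isomorphic.

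For step (ii), the non-conjugacy of $G_1$ and $G_2$ in $\PGL_3$ is already contained in the last sentence of Proposition~\ref{prop:intrans} ("different subgroups in the list are not conjugated in $\PGL_3$"), provided one checks that $G_1$ and $G_2$, as written in the corollary, are PGL$_3$-conjugate to two \emph{distinct} entries of the $\fA_4$-list — which is a direct coordinate computation: conjugate the second generator of $G_1$ to put the order-$3$ lift of a $3$-cycle into the normal form of the proposition, and verify the same conjugation sends the second generator of $G_2$ to the \emph{other} normal form (the one with $\zeta_3$ and $\zeta_3^2$ interchanged in the action on the $C_n$ factor). The isomorphism type $C_n\rtimes\fA_4$ versus $C_{2n}\rtimes\fA_4$ according to the parity of $n$ comes from computing the order of the subgroup generated by $\chi_r$ (with $r=n$ here) and the images of the $\fA_4$-lifts: when $n$ is odd, the $-1$ appearing in the generator $(x_1,x_3,-x_2)$ forces an extra factor of $2$ into the normal cyclic part.

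The main obstacle I anticipate is step (i) in the $\fD_n$ case: the proposition lists four sub-families for even $n$, and one must be certain that no two of them, for cleverly chosen parameters, produce isomorphic abstract groups — this requires identifying a robust isomorphism-invariant (I would use the isomorphism type of $Z(G)$ together with the structure of $G/Z(G)$ and the set of orders of elements mapping to reflections in $\fD_n$) and checking it separates all four families. The $\fS_4$ and $\fA_5$ cases are easier because each has only a single displayed family, so there is nothing to collide; and the $C_n$ case is abelian hence outside the scope of the corollary. Once the $\fD_n$ bookkeeping is done, the proof assembles quickly from Proposition~\ref{prop:intrans}.
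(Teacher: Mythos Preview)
Your proposal has a genuine gap at the very start: the corollary concerns \emph{all} finite nonabelian subgroups of $\PGL_3$, but you immediately restrict to intransitive ones via Proposition~\ref{prop:intrans} without justifying this reduction. The paper's proof handles this by first invoking Corollary~\ref{coro:nice}, which shows that nonabelian groups realizing actions of different types among \textbf{(I)}, \textbf{(T)}, \textbf{(P)} are never abstractly isomorphic; combined with the fact (from Proposition~\ref{prop:trans} and the primitive classification) that each subgroup of type \textbf{(T)} or \textbf{(P)} is already unique in $\PGL_3$ up to conjugation, this forces any non-conjugate isomorphic pair to lie in type \textbf{(I)}. You need this reduction before your case analysis even begins.

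Once you are in type \textbf{(I)}, your plan is essentially what the paper does, though the paper is far terser: it simply asserts that, using Proposition~\ref{prop:intrans}, one checks the only coincidence occurs for $\bar G\simeq\fA_4$, without spelling out the $\fD_n$ bookkeeping you (rightly) flag as the main obstacle. For the non-conjugacy of $G_1$ and $G_2$, the paper takes a different route from yours: rather than matching $G_1,G_2$ to distinct entries of the Proposition~\ref{prop:intrans} list and citing its last sentence, it exhibits a geometric invariant separating them directly --- the generic stabilizer of the line $\{x_2=(2\zeta_3+1)x_3\}$ is trivial under $G_1$ but is $C_3$ under $G_2$ (argued at $n=1$). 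Your approach via the list is more systematic and would also work, but your discussion of exactly which entry of the $\fA_4$-list $G_2$ corresponds to is vague; you would need to carry out the promised coordinate computation to make it rigorous.
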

\begin{proof}
    By Corollary~\ref{coro:nice}, we know that nonabelian groups giving rise to actions of different types among {\bf (I)}, {\bf (T)}, and {\bf (P)} are not isomorphic. Each subgroup in type {\bf (T)} and {\bf (P)} is unique in $\PGL_3$, up to conjugation. It follows that $G_1$ and $G_2$ give rise to actions of type {\bf (I)}. Using Proposition~\ref{prop:intrans}, we 
    check that the only possibility is when $\bar G=G/Z(G)\simeq \fA_4$, with the generators given above. 
    
    To show $G_1$ and $G_2$ are not conjugated in $\PGL_3$, we note that when $n=1$, the generic stabilizer of the line $\{x_2=(2\zeta_3+ 1)x_3\} \subset\bP^2_{x_1,x_2,x_3}$ is trivial under the action induced by $G_1$, but is $C_3$ under that induced by $G_2$.
\end{proof}

\section{Primitive actions}
\label{section:primitive}
Equivariant birational geometry of {\em primitive} actions was essentially settled, via equivariant MMP, in \cite{sako}; the classification of conjugacy of actions was not addressed. We proceed to fill this gap:

\begin{itemize}
    \item super-rigid, in particular, the actions are not conjugated in $\Cr_2$:
    \begin{itemize}
    \item $\fA_6$: four actions on $\bP^2$ \cite[Theorem B.9]{two-local},
    \item $\PSL_2(\bF_7)$: two actions on $\bP^2$ \cite[Theorem B.8]{two-local},
    \item  $\mathsf{PSU}_2(\bF_3)$: two actions on $\bP^2$ \cite[Proposition 3.16]{sako},
    \item  $\mathsf{ASL}_2(\bF_3)$: two actions on $\bP^2$ \cite[Proposition 3.16]{sako},
    \end{itemize}
    \item rigid:
    \begin{itemize}
\item
  $\fA_5$: two actions on $\bP^2$, conjugated in $\Cr_2$ and $\mathrm{Bir}^G(\mathbb{P}^2)\simeq\mathfrak{S}_5$ \cite[Lemma B.13]{two-local},
  \item  a subgroup $G=C_3^2\rtimes C_4$ of the Hessian group: two actions on $\bP^2$, conjugated in $\Cr_2$.
    \end{itemize}
\end{itemize}

We explain the last entry: there are eight faithful 3-dimensional representations of the central extension $C_3.G$, but only two, up to multiplication with characters, thus there are two $G$-actions on $\bP^2$. 
The outer automorphism group $\Out(G)$ is $C_2^2.$ In the automorphism group $\Aut(G)$, there are three conjugacy classes of subgroups of order 72, isomorphic to
$$
\mathfrak F_9,
\quad \fS_3\wr\fS_2,\quad\mathsf{PSU_3(\bF_2)}.
$$
Recall that we have inclusions $G\subset\mathsf{PSU_3(\bF_2)}\subset\PGL_3$. In particular, 
$$
\mathrm{Aut}^G(\bP^2)=\mathsf{PSU_3(\bF_2)}.
$$
Then the conjugation action of $\mathsf{PSU_3(\bF_2)}$ on $G$ induces a map
$$
\mathsf{PSU_3(\bF_2)}\stackrel{f}{\to} C_2\subset\Out(G).
$$

The conjugation in $\Cr_2$ can be seen as follows: according to \cite[Proposition 3.16]{sako}, there are exactly two orbits of size 6 on $\bP^2$, for each of the actions.
Blowing up any of these orbits, we obtain a cubic surface $S$,
which must be the Fermat cubic surface \cite[Table 4]{DI}, yielding an embedding
$$
G\hookrightarrow \Aut(S)=C_3^3\rtimes \fS_4.
$$
There is a unique subgroup $G_1\subset \Aut(S)$ of order 72. We know that $G_1\simeq\fS_3\wr\fS_2$ and $G\subset G_1$.
Choosing an element in $G_1\setminus G$, we obtain at element $\tau_1\in\Bir^G(\bP^2)$ such that $\langle\tau_1,G\rangle\simeq G_1$.
Similarly, blowing up another orbit of length $6$, we obtain at element $\tau_2\in\Bir^G(\bP^2)$ such that $\langle\tau_2,G\rangle\simeq G_1$.
Then
$$
\Bir^G(\bP^2)=\langle\tau_1,\tau_2,\mathsf{PSU}_3(\bF_2)\rangle,
$$
and $\langle\tau_1,\tau_2,G\rangle$ is an amalgamated product of two copies of $G_1$ with common subgroup $G$.
In particular, $\Bir^G(\bP^2)$ is infinite.

The conjugation action of $G_1$ on $G$ induces a map
$$
\fS_3\wr\fS_2\stackrel{g}{\to} C_2\subset\Out(G),
$$
and the images of $f$ and $g$ generate $\Out(G),$ since $\fS_3\wr\fS_2$ and $\mathsf{PSU_3(\bF_2)}$ are distinct index-2 subgroups of $\Aut(G)$. It follows that the two linear actions of $G$ are conjugated in $\Cr_2$.

\section{Transitive non-primitive actions}
\label{sect:trans}

In this section, we assume that $G\subset \PGL_3$ is one of the subgroups listed in Proposition~\ref{prop:trans}.
Observe that the points
\begin{equation}
\label{eq:orbit-3}    
[1:0:0], \quad [0:1:0], \quad [0:0:1]
\end{equation}
form a $G$-orbit of length $3$, and that 
$$
\iota\in \mathrm{Bir}^G(\mathbb{P}^2),
$$
where $\iota$ is the standard Cremona involution \eqref{eqn:crem}. 

If $G\simeq\fA_4$,  then $\mathrm{Aut}^G(\mathbb{P}^2)\simeq\fS_4$ and 
$$
\bar{\beta}(\mathrm{Aut}^G(\mathbb{P}^2))=\mathrm{Out}(G)\simeq C_2.
$$
Similarly, if $G\simeq\fS_4$, then $\mathrm{Aut}^G(\mathbb{P}^2)=G$ and $\mathrm{Out}(G)$  is trivial.
Hence, in these two cases, there is a unique $G$-action on $\mathbb{P}^2$.

\begin{lemm}[Pinardin]
\label{lemm:Antoine}
Suppose that $G\simeq\fS_4$. Then 
$$
\mathrm{Bir}^G(\mathbb{P}^2)=\langle G,\iota\rangle\simeq\fS_4\times C_2.
$$
\end{lemm}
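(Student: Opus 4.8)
The plan is to show that any $G$-birational self-map of $\mathbb{P}^2$ that normalizes $G \simeq \mathfrak{S}_4$ is, up to composition with an element of $\langle G, \iota\rangle$, the identity. The strategy is the equivariant Sarkisov program: decompose an arbitrary $\varphi \in \mathrm{Bir}^G(\mathbb{P}^2)$ into $G$-Sarkisov links starting and ending at $\mathbb{P}^2$ (using that $\mathbb{P}^2$ with $\mathrm{rk}\,\Pic^G = 1$ is a $G$-Mori fiber space, and that $G$ being transitive and non-primitive, $\mathbb{P}^2$ is $G$-solid here — this must be verified, but it is in the range covered by \cite{DI}), and track what happens at each step. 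Concretely, by Example~\ref{exam:sarkisov} the possible links out of $\mathbb{P}^2$ are governed by $G$-orbits $\Sigma$ with $|\Sigma| < 9$, so I first enumerate the short $G$-orbits on $\mathbb{P}^2$ for this specific $\mathfrak{S}_4$.

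First I would pin down the embedding: $G \simeq \mathfrak{S}_4 \hookrightarrow \PGL_3$ is the transitive imprimitive subgroup $C_2^2 \rtimes \mathfrak{S}_3$ from Proposition~\ref{prop:trans} (the case $n = 2$, image $\mathfrak{S}_3$), which is the group of monomial matrices with entries $\pm 1$ and determinant considerations modulo scalars — equivalently $\mathfrak{S}_4$ acting on $\mathbb{P}^2$ as the projectivization of its standard $3$-dimensional representation. Then I would list the $G$-orbits of length $< 9$: the coordinate points $\{[1{:}0{:}0],[0{:}1{:}0],[0{:}0{:}1]\}$ (length $3$), the orbit of $[1{:}1{:}1]$-type points and $[1{:}1{:}{-}1]$-type points (lengths $4$), possibly orbits of length $6$, and check which of these lie on lines, conics, etc. The key point is to show that for each admissible link (DP-DP with $r \in \{2,3,5,6\}$, DP-CB with $r \in \{1,4\}$) either no such $G$-orbit exists, or the link produces a surface that is not $G$-birational back to $\mathbb{P}^2$ in a way that yields a new normalizing element, or the link is realized by $\iota$ itself (the $r = 3$ DP-DP link through the coordinate triangle is precisely $\iota$). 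I expect the surviving links to be: the length-$3$ orbit giving $\iota$, and nothing else giving genuinely new elements of $\mathrm{Bir}^G(\mathbb{P}^2)$.

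Having shown $\mathrm{Bir}^G(\mathbb{P}^2) = \langle G, \iota\rangle$, the remaining task is the group structure. Since $\iota$ commutes with every monomial matrix up to the torus — more precisely $\iota$ normalizes $G$ and $\bar\beta(\iota) \in \Out(\mathfrak{S}_4)$ is trivial because $\Out(\mathfrak{S}_4) = 1$ — the element $\iota$ centralizes $G$ in $\Bir^G(\mathbb{P}^2)$ (one checks directly that conjugation of a signed permutation matrix by $\iota$ gives back the same matrix, as $\iota$ acts by $x \mapsto 1/x$ coordinatewise and inverts diagonal entries $\pm 1 \mapsto \pm 1$). Also $\iota^2 = \mathrm{id}$ and $\iota \notin \PGL_3$, so $\iota \notin G$; hence $\langle G, \iota\rangle \simeq G \times \langle \iota\rangle \simeq \mathfrak{S}_4 \times C_2$.

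The main obstacle is the Sarkisov-theoretic step: proving that $\mathbb{P}^2$ with this $\mathfrak{S}_4$-action admits no $G$-Sarkisov links other than the ones accounted for by $\iota$, i.e., controlling all short $G$-orbits and all resulting intermediate del Pezzo surfaces and conic bundles. This requires a careful case analysis of orbits of lengths $1,2,4,5,6,7,8$ and, for the conic-bundle outputs, ruling out that they reconnect to $\mathbb{P}^2$ via a chain not reducible to $\iota$ — here one leans on the classification of $G$-Mori fiber spaces and links in \cite{DI} and on the rigidity-type statements. Everything else — the orbit computation for an explicit monomial group, the verification that $\iota$ centralizes $G$, and the final direct-product identification — is routine.
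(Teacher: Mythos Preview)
Your overall strategy---run the $G$-Sarkisov program from $\mathbb{P}^2$, enumerate short orbits, and show every link is accounted for by $\iota$---matches the paper's, and your identification $\langle G,\iota\rangle\simeq\mathfrak{S}_4\times C_2$ is correct. But there is a genuine gap in the Sarkisov analysis, and one of your working assumptions is false.

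First, $\mathbb{P}^2$ is \emph{not} $G$-solid for $G\simeq\mathfrak{S}_4$. The points $[1{:}1{:}1],[-1{:}1{:}1],[1{:}{-}1{:}1],[1{:}1{:}{-}1]$ form a \emph{single} $G$-orbit of length $4$ (not two, as you tentatively wrote), it is in general position, and blowing it up gives a del Pezzo surface $\tilde S$ of degree $5$ with a $G$-equivariant conic bundle $\tilde\pi\colon\tilde S\to\mathbb{P}^1$. So there is a genuine DP--CB link out of $\mathbb{P}^2$, and you cannot simply ``expect nothing else''---you must analyse this conic bundle.

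Second, the step you called ``the main obstacle'' is exactly where the paper supplies the idea you are missing. The kernel of the $G$-action on the base $\mathbb{P}^1$ is the Klein four-group $C_2^2$. Since $C_2^2\subset\PGL_2$ has no fixed point on a general fiber, every $G$-orbit in $\tilde S$ meets each smooth fiber in at least two points. This violates the requirement for a CB--CB link (at most one point of the blown-up orbit per smooth fiber), so \emph{no} CB--CB links start at $\tilde S$. The only remaining extremal contraction is the blow-down back to $\mathbb{P}^2$, so the conic bundle is a dead end and contributes nothing new to $\mathrm{Bir}^G(\mathbb{P}^2)$.

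Finally, your orbit bookkeeping can be tightened: there are no $G$-orbits of length $1,2,5,7$; the orbits of length $6$ and $8$ lie on the invariant conic $x_1^2+x_2^2+x_3^2=0$ (and the length-$6$ orbit has collinear triples), hence are not in general position and give no links. So only the length-$3$ orbit (yielding $\iota$) and the length-$4$ orbit (yielding the dead-end conic bundle) need be considered.
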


\begin{proof}
Observe that the points 
\begin{equation}
\label{eq:orbit-4}    
[1:1:1],\quad [-1:1:1], \quad [1:-1:1], \quad [1:1:-1],
\end{equation}
form a $G$-orbit of length $4$. 
Then \eqref{eq:orbit-3} and \eqref{eq:orbit-4} are the only $G$-orbits in $\mathbb{P}^2$ of length $<9$ that are in general position.
Thus, every $G$-Sarkisov link that starts at $\mathbb{P}^2$ must blow up one of these two orbits.

Let $\sigma\colon\tilde{S}\to\mathbb{P}^2$ be the blow up of the $G$-orbit \eqref{eq:orbit-4}.
Then $\tilde{S}$ is a del Pezzo surface of degree $4$, and we have the following $G$-Sarkisov link:
$$
\xymatrix{
&\ar[ld]_{\sigma} \tilde{S} \ar[rd]^{\tilde{\pi}} \\
\bP^2 & & \bP^1}
$$
where $\tilde{\pi}$ is a conic bundle. 
The kernel of the $G$-action on $\bP^1$ is isomorphic to $C_2^2$, which implies that there are no $G$-Sarkisov links that start at $\tilde{S}$ except for the inverse of the constructed $G$-Sarkisov link, which brings us back to $\mathbb{P}^2$.
The required assertion follows from the classification of equivariant Sarkisov links starting at $\bP^2$.
\end{proof}

\begin{lemm}
\label{lemm:A4}
Suppose that $G\simeq\fA_4$. For $\lambda\in k\setminus\{0,1\}$ with $\lambda^6\ne -1$, let $\vartheta_\lambda$ be the birational selfmap of $\mathbb{P}^2$ given by
$$
\vartheta_\lambda: (x_1,x_2,x_3)\mapsto (f_1,f_2,f_3),
$$
where 
\begin{align*}
f_1&=x_1^4x_3 -\frac{\lambda^{12}+ 1}{\lambda^2}x_1^2x_2^2x_3 + \lambda^8x_2^4x_3 - 2\lambda^2x_1^2x_3^3 - 
    2\lambda^6x_2^2x_3^3 + \lambda^4x_3^5,\\
f_2&=\lambda^8x_1^4x_2 - 2\lambda^6x_1^2x_2^3 + \lambda^4x_2^5 -\frac{\lambda^{12}+ 1}{\lambda^2}x_1^2x_2x_3^2 - 
    2\lambda^2x_2^3x_3^2 + x_2x_3^4,\\
f_3&=\lambda^4x_1^5 - 2\lambda^2x_1^3x_2^2 + x_1x_2^4 - 2\lambda^6x_1^3x_3^2 -\frac{\lambda^{12}+ 1}{\lambda^2}x_1x_2^2x_3^2 + \lambda^8x_1x_3^4.
\end{align*}
Then $\vartheta_\lambda$ is an involution in $\mathrm{Bir}^G(\mathbb{P}^2)$, so, in particular, $\mathrm{Bir}^G(\mathbb{P}^2)$ is infinite.
Moreover, birational involutions $\vartheta_\lambda$, the standard Cremona involution $\iota$, and
$\mathrm{Aut}^G(\mathbb{P}^2)\simeq\fS_4$ generate the group $\mathrm{Bir}^G(\mathbb{P}^2)$.
\end{lemm}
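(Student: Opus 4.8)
The plan is to prove the lemma in two stages: first realise $\vartheta_\lambda$ as a $G$-equivariant Sarkisov link and check that it is an involution, then run the equivariant Sarkisov program to pin down all of $\Bir^G(\bP^2)$.

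\emph{Constructing $\vartheta_\lambda$.} A length-$6$ $G$-orbit in $\bP^2$ has a $C_2$-stabiliser, and the fixed locus of such a $C_2\subset\fA_4$ consists of one coordinate point together with one coordinate line; hence the length-$6$ $G$-orbits form a one-parameter family $\Sigma_\lambda$, namely the orbits of general points of the coordinate lines. For $\lambda\in k\setminus\{0,1\}$ with $\lambda^6\neq-1$ the six points of $\Sigma_\lambda$ are in general linear position and not contained in a conic (the excluded values being precisely the degenerations: points colliding, three becoming collinear, or all six landing on a conic). First I would blow up $\Sigma_\lambda$, obtaining a smooth cubic del Pezzo surface $\tilde S$ carrying a $G$-action, and compute $\rk\Pic(\tilde S)^G=2$: the permutation character of $\fA_4$ on the six exceptional curves has inner product $1$ with the trivial character, so $\Pic(\tilde S)^G$ is spanned by the pullback of a line and the sum of the six exceptional divisors. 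By Example~\ref{exam:sarkisov} this yields a {\bf DP-DP} $G$-Sarkisov link with $r=6$, namely $\bP^2\xleftarrow{\ \sigma\ }\tilde S\xrightarrow{\ \sigma'\ }\bP^2$, where $\sigma'$ contracts the strict transforms of the six conics through five-point subsets of $\Sigma_\lambda$; the composite $\sigma'\circ\sigma^{-1}$ is a Cremona transformation given by quintics with double points at the six points of $\Sigma_\lambda$. Since the transitive imprimitive $\fA_4$-action is unique up to conjugation in $\PGL_3$, after a coordinate normalisation this composite lies in $\Bir^G(\bP^2)$, and one matches it with the explicit formula for $\vartheta_\lambda$.

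\emph{Involution and infiniteness.} That $\vartheta_\lambda\circ\vartheta_\lambda=\mathrm{id}$ I would verify directly from the formulas; equivalently, by producing the involution $\tau\in\Aut(\tilde S)$ that centralises $G$ and interchanges the two contractions $\sigma,\sigma'$, so that $\vartheta_\lambda=\sigma\circ\tau\circ\sigma^{-1}\in\ZBir^G(\bP^2)$. The map $\vartheta_\lambda$ is nontrivial since it has degree $5$, and distinct admissible $\lambda$ give maps with distinct base loci, so $\Bir^G(\bP^2)$ is infinite.

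\emph{Enumerating the links.} By the equivariant Minimal Model Program and Sarkisov factorisation, every $\varphi\in\Bir^G(\bP^2)$ is a composition of $G$-biregular self-maps and $G$-Sarkisov links among $G$-Mori fiber spaces $G$-birational to $(\bP^2,G)$; as $\Aut^G(\bP^2)$ is already in the generating set it is enough to control the links. The $G$-orbits in $\bP^2$ of length $<9$ are exactly: the coordinate orbit \eqref{eq:orbit-3} of length $3$; the three length-$4$ orbits $O_1,O_2,O_3$ of the three $\sigma_{123}$-fixed points (no other length-$4$ orbit exists, as such an orbit has a stabiliser of order $3$); and the family $\Sigma_\lambda$ of length $6$. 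Lengths $1,2,5,7,8$ do not occur (there is no fixed point, $\fA_4$ has no index-$2$ subgroup, and $5,7,8\nmid 12$). By Example~\ref{exam:sarkisov}, the $G$-Sarkisov links starting at $\bP^2$ are therefore the $r=3$ link, which is $\iota$; the $r=6$ links, which are the $\vartheta_\lambda$; and the $r=4$ {\bf DP-CB} links, each landing at the degree-$5$ del Pezzo surface $\tilde S_4=\Bl_{O_\bullet}\bP^2$ with the conic bundle $\tilde\pi\colon\tilde S_4\to\bP^1$ induced by the pencil of conics through the length-$4$ orbit (three singular fibers). The three choices of $O_\bullet$ are permuted by $\mathrm{diag}(1,\zeta_3,\zeta_3^2)\in\Aut^G(\bP^2)$, so one may treat just one of them.

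\emph{The degree-$5$ conic bundle is a dead end.} As in the proof of Lemma~\ref{lemm:Antoine}, I would show the $G$-Mori fiber space $(\tilde S_4,\tilde\pi)$ admits only one $G$-Sarkisov link, the inverse {\bf DP-CB} link back to $\bP^2$. On the one hand, the $(-1)$-curves of $\tilde S_4$ form two $G$-orbits — the four exceptional curves and the six transforms of lines through pairs of blown-up points — and only the first consists of pairwise disjoint curves, so the only inverse-{\bf DP-CB} contraction available is the one contracting the four exceptional curves, which returns $\bP^2$. On the other hand, the $G$-action on the base of $\tilde\pi$ factors through $\fA_4\twoheadrightarrow C_3$ permuting the three singular fibers, with kernel $C_2^2$ acting trivially on the base; the $C_2^2$-fixed locus of $\tilde S_4$ is exactly the three coordinate points, and these are precisely the nodes of the three singular fibers, so every $G$-orbit on $\tilde S_4$ meeting each fiber in at most one point lies in the singular fibers and no {\bf CB-CB} link starts at $(\tilde S_4,\tilde\pi)$. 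Hence the round trip $\bP^2\dashrightarrow(\tilde S_4,\tilde\pi)\dashrightarrow\bP^2$ through a {\bf DP-CB} link and its inverse is $G$-biregular; collapsing such round trips in a Sarkisov factorisation of an arbitrary $\varphi$ leaves a composition of copies of $\iota$, copies of the $\vartheta_\lambda$, and elements of $\Aut^G(\bP^2)$, which gives the asserted generation. The bulk of the work lies in two less-formal points: the direct verification that $\vartheta_\lambda^2=\mathrm{id}$ (a heavy polynomial identity, or equivalently the construction of the centralising involution $\tau$ on the cubic surface, i.e.\ showing that the Weyl-group involution interchanging the two sixtuples of disjoint $(-1)$-curves is realised by $\Aut(\tilde S)$), and the dead-end analysis of the degree-$5$ $G$-conic bundle — writing its conic bundle structure explicitly, identifying the $C_2^2$ kernel of the action on the base, and checking that the $C_2^2$-fixed points coincide with the nodes of the singular fibers. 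I expect this last analysis to demand the most care.
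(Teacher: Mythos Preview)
Your outline follows the paper's argument closely: the orbit count, the cubic-surface realisation of $\vartheta_\lambda$, and the Sarkisov book-keeping are all the same. Your dead-end analysis of the degree-$5$ conic bundle is more explicit than the paper's ``arguing as in the proof of Lemma~\ref{lemm:Antoine}'', but ends at the same conclusion via the same mechanism (the $C_2^2$ kernel on the base forces any candidate CB--CB centre into the singular fibres).

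There is one genuine slip. You assert that the involution $\tau\in\Aut(\tilde S)$ \emph{centralises} $G$, and conclude $\vartheta_\lambda\in\ZBir^G(\bP^2)$. This is false: for generic admissible $\lambda$ one has $\Aut(\tilde S)=\fS_4$, and the centraliser of $\fA_4$ in $\fS_4$ is trivial, so no nontrivial element of $\Aut(\tilde S)$ centralises $G$. What actually happens (and what the paper records) is that $\theta_\lambda\in\fS_4\setminus\fA_4$ is a transposition \emph{normalising} $G$, with $\langle G,\vartheta_\lambda\rangle\simeq\fS_4$; in particular $\vartheta_\lambda\in\Bir^G(\bP^2)\setminus\ZBir^G(\bP^2)$. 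You may have been pattern-matching to Remark~\ref{rema:Bertini-Geiser}, but that remark applies only when the blown-up surface has degree $1$ or $2$ (Bertini/Geiser), whereas here $\tilde S$ is a cubic. This does not break the lemma, which only asks for $\vartheta_\lambda\in\Bir^G(\bP^2)$, but your route to the involution property needs adjustment: rather than invoking a centralising $\tau$, argue that some transposition in $\fS_4=\Aut(\tilde S)$ swaps the two $\fA_4$-invariant extremal rays (equivalently the two sixers of disjoint $(-1)$-curves), and then $\vartheta_\lambda=\sigma\circ\tau\circ\sigma^{-1}$ has order $2$ because $\tau$ does.
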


\begin{proof}
Observe that $\mathbb{P}^2$ contains one $G$-orbit of length $3$, 
three $G$-orbits of length $4$, and infinitely many $G$-orbits of length $6$ such that each of them is a $G$-orbit of $[0:1:\lambda]$, for $\lambda\in k^\times$.
Let $\sigma_\lambda\colon S_\lambda\to\mathbb{P}^2$ be the blow up of the $G$-orbit of the point $[0:1:\lambda]$.
Then $S_\lambda$ is a smooth cubic surface if and only if $\lambda\ne 0,1$ and $\lambda^6\ne -1$.
In this case, we have the following $G$-commutative diagram:
$$
\xymatrix{
S_\lambda\ar[d]_{\sigma_\lambda}\ar@{->}[rr]^{\theta_\lambda}&&\ar[d]^{\sigma_\lambda}S_\lambda \\
\bP^2\ar@{-->}[rr]_{\vartheta_\lambda} & & \bP^2}
$$
where $\theta_\lambda$ is a biregular involution in $\mathrm{Aut}(S_\lambda)$.
Note also that 
$$
\langle G,\vartheta_\lambda\rangle\simeq\fS_4.
$$
Now, arguing as in the proof of Lemma~\ref{lemm:Antoine}, we see that $\mathrm{Bir}^G(\mathbb{P}^2)$ is generated by $\mathrm{Aut}^G(\mathbb{P}^2)$, the involution $\iota$, and involutions $\vartheta_\lambda$.
\end{proof}

Recall from \cite[Theorem 1.3]{sako} that  $\bP^2$ is $G$-birationally rigid if and only $G\not\simeq \fA_4,\fS_4$.
Moreover, if 
$$
G\not\simeq \fA_4, \quad \fS_4, \quad C_3^2, \quad C_3\rtimes\mathfrak{S}_3, \quad C_7\rtimes C_3,
$$
then it follows from  \cite[Lemma~3.14]{sako} that \eqref{eq:orbit-3} is the only $G$-orbit in $\mathbb{P}^2$ of length $3$. From the proof of \cite[Theorem 1.3]{sako}, we know that  $\mathrm{Bir}^G(\mathbb{P}^2)$ is finite, and
that 
\begin{equation}
\label{eq:Bir-Aut}
\mathrm{Bir}^G(\mathbb{P}^2)=\langle\mathrm{Aut}^G(\mathbb{P}^2),\iota\rangle.
\end{equation}

If $G\simeq C_3\rtimes \fS_3$ or $G\simeq C_3^2$, then it follows from \cite[Lemma~3.14]{sako} that $\mathbb{P}^2$ has $4$ orbits of length $3$ (the group $C_3\rtimes\mathfrak{S}_3$ is erroneously omitted in \cite[Lemma~3.14]{sako}).
In these two cases, we have 
$$
\mathrm{Aut}^G(\mathbb{P}^2)\simeq\mathsf{ASL}_2(\bF_3),
$$
and all $G$-orbits of length $3$ form one orbit of $\mathrm{Aut}^G(\mathbb{P}^2)$. 
It follows from \cite[Theorem 1.3]{sako} that \eqref{eq:Bir-Aut} also holds.

\begin{lemm}
\label{lemm:C3-S3}
Suppose $G\simeq C_3^2$ or $C_3\rtimes\mathfrak{S}_3$. Then $\mathrm{Bir}^G(\mathbb{P}^2)$ is infinite.
\end{lemm}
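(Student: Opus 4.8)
The plan is to use the explicit description of $\mathrm{Bir}^G(\mathbb{P}^2)$ that has just been established: since $G\simeq C_3^2$ or $C_3\rtimes\mathfrak{S}_3$ we have $\mathrm{Bir}^G(\mathbb{P}^2)=\langle\mathrm{Aut}^G(\mathbb{P}^2),\iota\rangle$ with $\mathrm{Aut}^G(\mathbb{P}^2)\simeq\mathsf{ASL}_2(\bF_3)$, the Hessian group, which preserves the Hesse pencil of cubics. First I would identify the four $G$-orbits of length $3$ with the vertex sets $\Sigma_1,\dots,\Sigma_4$ of the four triangles (the singular members of the Hesse pencil); here $\Sigma_1=\{[1:0:0],[0:1:0],[0:0:1]\}$ is the coordinate triangle, on which $\iota$ is centered. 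Conjugating $\iota$ by elements of $\mathsf{ASL}_2(\bF_3)$ — which permutes the four triangles through its quotient $\mathfrak{A}_4$ — produces quadratic Cremona involutions $\iota_2,\iota_3,\iota_4$ centered at $\Sigma_2,\Sigma_3,\Sigma_4$; all four $\iota_i$ lie in $\mathrm{Bir}^G(\mathbb{P}^2)$.

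The key geometric observation is that each $\iota_i$ maps the twelve-point set $\Sigma_1\cup\Sigma_2\cup\Sigma_3\cup\Sigma_4$ (the dual Hesse configuration) to itself: $\iota_i$ fixes the three sides of the $i$-th triangle, hence permutes $\Sigma_i$, and a direct computation (for $\iota_1=\iota$, then by symmetry) shows it also preserves the union of the other three triples, merely permuting them. Therefore the entire group $\mathrm{Bir}^G(\mathbb{P}^2)=\langle\mathsf{ASL}_2(\bF_3),\iota\rangle$ lifts to a group of \emph{biregular} automorphisms of the surface $X$ obtained by blowing up $\mathbb{P}^2$ at these twelve points, giving an embedding $\mathrm{Bir}^G(\mathbb{P}^2)\hookrightarrow\mathrm{Aut}(X)$, where $K_X^2=9-12=-3<0$. (Each $\iota_i$ is biregular on the del Pezzo surface of degree $6$ obtained by blowing up $\Sigma_i$, and it permutes the remaining nine points, so it extends across all twelve blow-ups.)

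It then remains to show that the image in $\mathrm{Aut}(X)$ is infinite. The group acts on $\mathrm{Pic}(X)\simeq\mathbb{Z}^{13}$ preserving the intersection form of signature $(1,12)$ and the class $K_X$, hence on the Lorentzian lattice $K_X^\perp$; it suffices to exhibit a loxodromic element, i.e. one acting on $\mathrm{Pic}(X)$ with spectral radius $>1$. Concretely I would write down the reflection-plus-permutation matrices by which $\iota_1,\dots,\iota_4$ act on $\mathrm{Pic}(X)$ and check that a short word — a product $\iota_i\iota_j$, or $\iota_1\iota_2\iota_3$ — has a characteristic polynomial with a real root $>1$; equivalently, that the orbit of an exceptional class is infinite, so $X$ carries infinitely many $(-1)$-curves. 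Either way $\mathrm{Aut}(X)$, and hence $\mathrm{Bir}^G(\mathbb{P}^2)$, is infinite; the case $G\simeq C_3\rtimes\mathfrak{S}_3$ is handled verbatim, since it has the same normalizer $\mathsf{ASL}_2(\bF_3)$ in $\mathrm{Aut}(\mathbb{P}^2)$ and the same four length-$3$ orbits.

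The main obstacle is precisely this last step: confirming that the hyperbolic group generated by the $\iota_i$ on $\mathrm{Pic}(X)$ is genuinely infinite rather than accidentally fixing an ample-type class. Morally infinitude is forced by $K_X^2<0$ together with the special incidence structure of the twelve points (the nine four-point lines of the dual Hesse configuration produce extra rigid curves and thus a large reflection subgroup), but a clean proof needs either a careful analysis of this configuration or an explicit integer-matrix computation — the sort of verification one would ultimately certify with a short {\tt Magma} calculation.
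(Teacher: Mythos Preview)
Your approach is correct and genuinely different from the paper's. The paper argues by contradiction: if $\hat G=\mathrm{Bir}^G(\mathbb{P}^2)$ were finite, one regularizes the $\hat G$-action on a $\hat G$-minimal rational surface $S$; since $\mathbb{P}^2$ is $G$-solid this $S$ must be a del Pezzo surface, and the classification forces $S\simeq\mathbb{P}^2$, so $\hat G\subset\PGL_3$. But $\hat G$ contains a copy of $\mathsf{ASL}_2(\bF_3)$, which is a \emph{maximal} finite subgroup of $\PGL_3$, whence $\hat G\simeq\mathsf{ASL}_2(\bF_3)$ --- contradicting $\iota\in\hat G\setminus\mathrm{Aut}^G(\mathbb{P}^2)$. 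No computation is needed.

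Your constructive route --- regularizing on the blowup $X$ of the twelve triangle vertices of the Hesse configuration and exhibiting a loxodromic element on $\mathrm{Pic}(X)$ --- also works, and yields extra dynamical information, but your suggested short words need care. The product $\iota_1\iota_2$ (with $\iota_i$ the quadratic involution based at $\Sigma_i$) has \emph{finite} order: since $\iota_1$ fixes $\Sigma_2$ and $\iota_2$ fixes $\Sigma_1$ setwise, both lift to automorphisms of the blowup at $\Sigma_1\cup\Sigma_2$, a smooth cubic surface, whose automorphism group is finite (concretely, $(\iota_2\iota_1)^{*3}H=H$). Likewise $\iota_3\iota_4$ is finite. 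By contrast, for $\iota_1\iota_3$ (with $\iota_3$ chosen so that it swaps $\Sigma_1\leftrightarrow\Sigma_2$) one computes on the invariant sublattice $\langle H,F_1,\dots,F_4\rangle\subset\mathrm{Pic}(X)$, where $F_k=\sum_{P\in\Sigma_k}E_P$, that the characteristic polynomial is
\[
(\lambda-1)\bigl(\lambda^4-3\lambda^3+\lambda^2-3\lambda+1\bigr),
\]
whose Salem factor has a real root $\tfrac{1}{4}\bigl(3+\sqrt{13}+\sqrt{6+6\sqrt{13}}\bigr)\approx 2.97>1$. So your proposed verification succeeds, but only for the ``non-complementary'' pairs $\{i,j\}\ne\{1,2\},\{3,4\}$, and the choice of conjugator defining $\iota_3$ matters. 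The paper's argument trades this explicit computation for the one-line maximality fact about the Hessian group.
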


\begin{proof}
Set $\hat{G}=\mathrm{Bir}^G(\mathbb{P}^2)$.
If $\hat{G}$ is finite, its birational action on $\mathbb{P}^2$ is regularized on a smooth projective rational surface $S$ such that 
\begin{itemize}
    \item $S$ is a smooth del Pezzo surface with $\rk \Pic(S)^{\hat{G}}=1$, or
    \item $S$ admits a $\hat{G}$-equivariant conic bundle 
    $$
\pi :S\to \bP^1,
$$ 
with $\rk\, \Pic(S)^{\hat{G}}=2$.     
\end{itemize}
The latter case is impossible, since $\mathbb{P}^2$ is $G$-solid \cite{sako},
so that $S$ is a smooth del Pezzo surface.
Using the classification of finite subgroups of automorphism groups of smooth del Pezzo surfaces \cite{DI}, we conclude that $S\simeq\mathbb{P}^2$.
Since $\mathrm{Aut}^G(\mathbb{P}^2)\simeq\mathsf{ASL}_2(\bF_3)$,
it follows from Corollary~\ref{coro:nice} that $\hat{G}$ is a primitive subgroup in $\mathrm{Aut}(S)\simeq\PGL_3$, 
which is a contradiction since $\mathsf{ASL}_2(\bF_3)$ is a maximal subgroup in $\PGL_3$.
\end{proof}

If $G\simeq C_7\rtimes C_3$, then $\mathrm{Aut}^G(\mathbb{P}^2)=G$, and \eqref{eq:orbit-3} is the only $G$-orbit in $\mathbb{P}^2$ of length $3$.
In this case, it follows from the proof of \cite[Theorem 1.3]{sako} that 
$$
\mathrm{Bir}^G(\mathbb{P}^2)=\langle G,\iota,\tau_1,\tau_2,\tau_3\rangle,
$$
where $\tau_1,\tau_2,\tau_3$ are Geiser involutions, implicitly described in the proof of \cite[Lemma~3.14]{sako}.
One can show that the group $\langle\iota,\tau_1,\tau_2,\tau_3\rangle$ is infinite, which gives:

\begin{coro}
If $G\simeq C_7\rtimes C_3$, then $\mathrm{Bir}^G(\mathbb{P}^2)$ is infinite.    
\end{coro}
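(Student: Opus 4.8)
The plan is to mimic Lemma~\ref{lemm:C3-S3}, with the three Geiser involutions doing the work that the large group $\mathrm{Aut}^G(\bP^2)$ did there. First recall the shape of $\Bir^G(\bP^2)$. Since $|G|=21$ and $G$ is transitive, \eqref{eq:orbit-3} is the unique $G$-orbit of length $3$, while a $G$-orbit of length $7$ is exactly the fixed locus of a Sylow $3$-subgroup; as $C_7\rtimes C_3$ is a Frobenius group it has seven such subgroups, each fixing three points of $\bP^2$, and a short count distributes these $21$ points into exactly three $G$-orbits $\Sigma_1,\Sigma_2,\Sigma_3$ of length $7$ (the orbits of $[1:1:1]$, $[1:\zeta_3:\zeta_3^2]$ and $[1:\zeta_3^2:\zeta_3]$). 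By Example~\ref{exam:sarkisov} together with \cite[\S 3]{sako}, every $G$-Sarkisov link starting from $\bP^2$ is either the blow-up of \eqref{eq:orbit-3}, producing the Cremona involution $\iota$, or the blow-up of some $\Sigma_i$ to a del Pezzo surface of degree $2$, producing via Remark~\ref{rema:Bertini-Geiser} a Geiser birational involution $\tau_i$ that \emph{centralizes} $G$. As the $\Sigma_i$ are distinct and a Geiser involution recovers its seven-point base locus, $\tau_1,\tau_2,\tau_3$ are three \emph{distinct} elements of $\ZBir^G(\bP^2)$, and $\Bir^G(\bP^2)=\langle G,\iota,\tau_1,\tau_2,\tau_3\rangle$ as in \cite{sako}.

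Now suppose, for contradiction, that $\hat G:=\Bir^G(\bP^2)$ is finite. Arguing as in Lemma~\ref{lemm:C3-S3} — $\bP^2$ is $G$-birationally rigid (as $C_7\rtimes C_3\not\simeq\fA_4,\fS_4$), hence $G$-solid — the birational $\hat G$-action regularizes on a smooth del Pezzo $\hat G$-surface $S$ with $\rk\Pic(S)^{\hat G}=1$ and $\hat G\subseteq\Aut(S)$. Since $G\subseteq\Aut(S)$ and $7\mid|G|$, the classification of automorphism groups of smooth del Pezzo surfaces \cite{DI} forces $S$ to be $\bP^2$, $\bP^1\times\bP^1$, or a del Pezzo surface of degree $6$, $2$ or $1$. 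The possibility $S=\bP^1\times\bP^1$ is excluded because $C_7\rtimes C_3$, having odd order, cannot surject onto the factor-swapping $C_2$, yet embeds into neither $\PGL_2$ nor $(\PGL_2)^2$. If $S\simeq\bP^2$, then $\hat G\subseteq\Aut(\bP^2)=\PGL_3$, contradicting $\iota\in\hat G\setminus\PGL_3$. If $\deg S=6$, then $\Aut(S)=\gm^2\rtimes(\fS_3\times C_2)$; the unique $C_7\subset G$ lies in the torus and is centralized there by all of $\gm^2$, so that $\mathrm Z_{\Aut(S)}(G)=(\gm^2)^{C_3}\simeq C_3$ contains no involution, contradicting $\tau_1\in\mathrm Z_{\hat G}(G)$. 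If $\deg S\in\{1,2\}$, then $\Aut(S)$ contains its central Bertini, resp.\ Geiser, involution $\beta_S$, and a pass through \cite{DI} shows that whenever $C_7\rtimes C_3\hookrightarrow\Aut(S)$ one has $\mathrm Z_{\Aut(S)}(G)=\langle\beta_S\rangle\simeq C_2$: for $\deg S=2$ the anticanonical branch quartic must be the Klein quartic, so $\Aut(S)=C_2\times\PSL_2(\bF_7)$ and $\mathrm Z_{\PSL_2(\bF_7)}(G)=1$. In all cases $\mathrm Z_{\Aut(S)}(G)$ has at most one involution, contradicting the three distinct $G$-centralizing involutions $\tau_1,\tau_2,\tau_3\in\hat G$. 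Therefore $\Bir^G(\bP^2)$ is infinite.

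The step I expect to be the main obstacle is this last one: pinning down the del Pezzo model $S$ from $7\mid|\Aut(S)|$ and then controlling $\mathrm Z_{\Aut(S)}(G)$ in degrees $6$, $2$ and $1$, with the degree-$2$ case hinging on the identification of the branch curve with the Klein quartic and the degree-$1$ case requiring an analogous, heavier analysis in $W(E_8)$ (or a direct check that $C_7\rtimes C_3$ does not act on a del Pezzo surface of degree $1$). By the same circle of ideas one must also rule out a surviving $\hat G$-conic bundle model, using that $C_7$ then acts fibrewise so any $G$-centralizing involution is forced to act trivially on the generic fibre. A technical point worth stressing is that $\rk\Pic(S)^G$ may exceed $\rk\Pic(S)^{\hat G}=1$, so that $G$-birational rigidity does \emph{not} by itself give $S\simeq_G\bP^2$ — which is precisely why the three Geiser involutions, and not the size of $\mathrm{Aut}^G(\bP^2)$ as in Lemma~\ref{lemm:C3-S3}, are the right tool here. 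Finally, an alternative route that bypasses the del Pezzo classification entirely is dynamical: the base loci of $\iota$ and $\tau_1$ are disjoint, so a finite computation on a common resolution should show $\iota\circ\tau_1$ is algebraically stable with first dynamical degree $>1$; then $\langle\iota,\tau_1\rangle\subseteq\Bir^G(\bP^2)$ is infinite dihedral, which suffices.
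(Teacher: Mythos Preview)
The paper does not actually prove this corollary: the sentence preceding it merely asserts that ``one can show that the group $\langle\iota,\tau_1,\tau_2,\tau_3\rangle$ is infinite'', with no further detail. Your regularization-and-case-analysis argument, patterned on Lemma~\ref{lemm:C3-S3} but with the three distinct Geiser involutions $\tau_1,\tau_2,\tau_3\in\ZBir^G(\bP^2)$ replacing the large $\Aut^G(\bP^2)$ used there, therefore supplies a proof where the paper gives none. The overall strategy and the treatment of $\bP^1\times\bP^1$, degree $6$ (via $Z_{\Aut(S)}(G)\simeq C_3$), and degree $2$ (the Klein quartic is indeed the unique smooth plane quartic carrying $C_7\rtimes C_3$) are correct.

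Two points need tightening. First, the step ``if $S\simeq\bP^2$ then $\hat G\subseteq\PGL_3$, contradicting $\iota\in\hat G\setminus\PGL_3$'' conflates the regular $\hat G$-action on the model $S$ with the birational one on the original $\bP^2$; that $\hat G$ embeds abstractly in some $\PGL_3$ is not by itself a contradiction. The fix is immediate: since $\Pic(\bP^2)=\bZ$, the model $S$ is automatically $G$-minimal, so $G$-birational rigidity gives a $G$-biregular map $S\to\bP^2$, whose composition with the regularization map lies in $\hat G$ and conjugates $\hat G$ into $\Aut(\bP^2)$ --- contradicting $\iota\notin\Aut(\bP^2)$. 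Equivalently, the image of $\hat G$ in $\Aut(S)\simeq\PGL_3$ normalizes the image of $G$, but $\Aut^G(\bP^2)=G$, forcing $|\hat G|\le 21$, absurd since $\iota$ has even order. Second, the degree-$1$ case you flag as ``heavier'' is in fact a one-liner: every automorphism of a del Pezzo surface $S$ of degree $1$ fixes the unique base point $p$ of $|{-K_S}|$, and the tangent representation $\Aut(S)\hookrightarrow\GL(T_pS)\simeq\GL_2$ is faithful (a finite-order automorphism with trivial differential at a smooth fixed point is the identity). Since the irreducible representations of $C_7\rtimes C_3$ have dimensions $1$ and $3$ only, it has no faithful $2$-dimensional representation and hence does not act on any del Pezzo surface of degree $1$. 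With these two adjustments your argument is complete; the dynamical alternative via $\lambda_1(\iota\tau_1)>1$ is also viable and is perhaps closest in spirit to the paper's bare assertion.
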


If $G\simeq\fA_4$ or $G\simeq\fS_4$, then $\bar{\beta}(\iota)$ is the identity, so that conjugation by $\iota$ does not change the $G$-action,
which is not very surprising, since $\fA_4$ and $\fS_4$ admit one faithful action on $\mathbb{P}^2$.
In all other cases, it follows from Remark~\ref{rema:Bertini-Geiser} that
$$
\bar{\beta}(\mathrm{Bir}^G(\mathbb{P}^2))=\langle\bar{\beta}(\mathrm{Aut}^G(\mathbb{P}^2)),\bar{\beta}(\iota)\rangle,
$$
where $\iota$ is the standard Cremona involution. 
We proceed to show how to compute $\bar{\beta}(\mathrm{Bir}^G(\mathbb{P}^2))$ in several representative examples.

\begin{exam}
\label{exam:C3-C3}
Suppose that $G\simeq C_3^2$. Then $\mathrm{Out}(G)\simeq\mathsf{GL}_2(\bF_3)$, the group $\mathrm{Aut}^G(\mathbb{P}^2)$ is the Hessian subgroup $\mathsf{ASL}_2(\bF_3)\subset\PGL_3$ and
$$
\bar{\beta}(\mathrm{Aut}^G(\mathbb{P}^2))\simeq \mathsf{SL}_2(\bF_3),
$$
and we have two non-isomorphic $G$-actions on $\mathbb{P}^2$. 
These actions are conjugated in $\Cr_2$, because $\bar{\beta}(\iota)\not\in\bar{\beta}(\mathrm{Aut}^G(\mathbb{P}^2))$.
Thus, 
$$
\bar{\beta}(\mathrm{Bir}^G(\mathbb{P}^2))=\mathrm{Out}(G),
$$
and there is a unique imprimitive $G$-action up to conjugation in $\Cr_2$.
\end{exam}

\begin{exam}
Suppose that $G\simeq  C_3\rtimes \fS_3$. Then $\mathrm{Aut}^G(\mathbb{P}^2)\simeq\mathsf{ASL}_2(\bF_3)$,
$\mathrm{Out}(G)\simeq\fS_4$,  and
$$
\bar{\beta}(\mathrm{Aut}^G(\mathbb{P}^2))\simeq \fA_4.
$$
Therefore, there are two non-isomorphic $G$-actions on $\mathbb{P}^2$. 
They are conjugated in $\Cr_2$, because $\bar{\beta}(\iota)\not\in\bar{\beta}(\mathrm{Aut}^G(\mathbb{P}^2))$. Then $$
\bar{\beta}(\mathrm{Bir}^G(\mathbb{P}^2))=\mathrm{Out}(G).
$$
\end{exam}

\begin{exam}
Suppose that $G\simeq C_7\rtimes C_3$. Then $\mathrm{Out}(G)\simeq C_2$ and
$$
\mathrm{Aut}^G(\mathbb{P}^2)=G.
$$
Hence, there are two non-isomorphic $G$-actions on $\mathbb{P}^2$, conjugated in $\Cr_2$ by $\iota$, and 
$$
\bar{\beta}(\mathrm{Bir}^G(\mathbb{P}^2))=\mathrm{Out}(G),
$$
since $\beta(\iota)$ is not an inner automorphism of $G$.
\end{exam}

\begin{exam}
In the notation of Proposition~\ref{prop:trans},
suppose further that $\nu(G)=\fS_3$ and $G\simeq C_n^2\rtimes \fS_3$ with $n\geq 3$. Then 
$$
\mathrm{Aut}^G(\mathbb{P}^2)=G,
$$
and $\mathrm{Out}(G)\simeq (\mathbb{Z}/n\mathbb{Z})^\times$, and we have $|(\mathbb{Z}/n\mathbb{Z})^\times|$ 
non-isomorphic $G$-actions on $\mathbb{P}^2$.
Then 
$$
\bar{\beta}(\mathrm{Bir}^G(\mathbb{P}^2))\simeq C_2,
$$
since $\beta(\iota)$ is not an inner automorphism.
\end{exam}

In all these examples, the conjugation by $\iota$ changes the action of the~group $G$ on $\mathbb{P}^2$. This is not a coincidence: 

\begin{lemm}
\label{lemm:iota-beta}
Suppose that $G\not\simeq\fA_4, \fS_4$.
Then $\bar{\beta}(\iota)\not\in\bar{\beta}(\mathrm{Aut}^G(\mathbb{P}^2))$.
\end{lemm}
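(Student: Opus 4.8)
The plan is to compare the two sides after restricting to the diagonal subgroup $T=\ker(\nu)\subseteq G$. First I would identify $\beta(\iota)$. Since the Cremona involution $\iota$ commutes with $\sigma_{123}$ (and with $\sigma_{12}$, when it is present) and sends $\mathrm{diag}(a,b,c)$ to $\mathrm{diag}(a^{-1},b^{-1},c^{-1})$, the automorphism $\beta(\iota)$ fixes the chosen splitting of $\nu$ and acts on $T$ by $t\mapsto t^{-1}$. Moreover $\exp(T)\leqslant 2$ would force $G\simeq\fA_4$ or $\fS_4$ (inspection of Proposition~\ref{prop:trans}), so under our hypothesis $T$ is nontrivial of exponent $\geqslant 3$ and inversion on $T$ is a nontrivial involution.

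Next, suppose in addition $G\not\simeq C_3^2$ and $G\not\simeq C_3\rtimes\fS_3$. Then, as recalled above (see \cite[Lemma~3.14]{sako}), the orbit \eqref{eq:orbit-3} is the unique $G$-orbit in $\bP^2$ of length $3$; hence $\Aut^G(\bP^2)=N_{\PGL_3}(G)$ preserves it, and therefore lies in the stabilizer in $\PGL_3$ of $\{[1:0:0],[0:1:0],[0:0:1]\}$, i.e.\ in the group $D\rtimes\fS_3$ of monomial matrices, where $D$ is the diagonal torus. For $g=dw\in D\rtimes\fS_3$ with $d\in D$ and $w$ a permutation matrix, conjugation by $g$ restricted to $T\subseteq D$ equals conjugation by $w$, because $D$ is abelian; and since $G\subseteq D\rtimes\fS_3$ as well, $\mathrm{Inn}(G)$ also acts on $T$ through coordinate permutations. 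Thus every automorphism in $\beta(\Aut^G(\bP^2))$ restricts on $T$ to the action of some $w\in\fS_3$, and it suffices to show that inversion of $T$ is not of this form.

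This last point is a short computation. The requirement that a permutation $w$ act on $T$ as inversion is equivalent to the vanishing on $T$ of a fixed character of $D$ — for $w=\sigma_{12}$ it is $\mathrm{diag}(a,b,c)\mapsto abc^{-2}$ — that is, to $T$ being contained in a one-dimensional subtorus of $D$; comparing this with the generators listed in Proposition~\ref{prop:trans} forces $T$ to be cyclic of order dividing $3$, which does not occur once $G\not\simeq C_3^2, C_3\rtimes\fS_3$. (The trivial permutation is excluded by $\exp(T)\geqslant 3$, and an order-$3$ permutation because inversion has order $2$.) This proves $\bar\beta(\iota)\notin\bar\beta(\Aut^G(\bP^2))$ for all such $G$.

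It remains to treat $G\simeq C_3^2$ and $G\simeq C_3\rtimes\fS_3$, where $\Aut^G(\bP^2)=\mathsf{ASL}_2(\bF_3)\not\subseteq D\rtimes\fS_3$ — precisely because these are the two groups for which $\bP^2$ carries four $G$-orbits of length $3$, forming one $\mathsf{ASL}_2(\bF_3)$-orbit — so the torus argument does not apply. Here I would simply quote Example~\ref{exam:C3-C3} and the example following it, where $\bar\beta(\Aut^G(\bP^2))$ (isomorphic to $\mathsf{SL}_2(\bF_3)$, resp.\ $\fA_4$) and $\bar\beta(\iota)$ are computed directly and seen to be disjoint. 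The only genuinely delicate point is this dichotomy: the clean torus argument works exactly when the length-$3$ orbit is unique, and these two small groups — which, as the computation above shows, are also the only ones for which some coordinate permutation inverts $T$ — must be dispatched via the explicit computations already recorded in the paper.
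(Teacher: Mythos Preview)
Your proposal is correct and follows essentially the same approach as the paper's proof: both identify $\beta(\iota)$ as inversion on $T$, dispose of $C_3^2$ and $C_3\rtimes\fS_3$ via the preceding examples, use uniqueness of the length-$3$ orbit to confine $\mathrm{Aut}^G(\bP^2)$ to monomial matrices, reduce the action on $T$ to that of the permutation part, and then check that no coordinate permutation inverts $T$. Your exclusion of $3$-cycles by the order argument and of transpositions via the character $abc^{-2}$ (combined, implicitly, with $\sigma_{123}$-invariance of $T$ to land in the cyclic group of order $3$) is a bit more uniform than the paper's explicit case check, but the strategy is the same.
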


\begin{proof}
Recall that $G$ is one of the subgroups in Proposition~\ref{prop:trans}.
Let us use notation introduced in this proposition. 
Keeping in mind examples above, we may assume further that $G\not\simeq C_3^2, C_3\rtimes \fS_3$. 
Then  the~$G$-orbit \eqref{eq:orbit-3} is the only $G$-orbit of length $3$ in $\mathbb{P}^2$,
so that \eqref{eq:orbit-3} is also an orbit of the group $\mathrm{Aut}^G(\mathbb{P}^2)$.

Denote by $T\subseteq G$ the subgroup consisting of transformations given by diagonal matrices. 
Then $G$ is generated by $T$, $\sigma_{123}$ and in some cases by $\sigma_{12}$, where all possibilities for $T$ are given by Proposition~\ref{prop:trans}.
Conjugation by $\iota$ gives an automorphism $\phi\in\mathrm{Aut}(G)$ that acts trivially on $\sigma_{123}$ and $\sigma_{12}$ (if $\sigma_{12}\in G$),
but 
$$
\phi(t)=\frac{1}{t},
$$
for every $t\in T$. Hence, $\phi$ is not trivial, since $G\not\simeq\fA_4, \fS_4$.

Let $G^\prime=\mathrm{Aut}^G(\mathbb{P}^2)$,
and let $T^\prime\subseteq G^\prime$ be the subgroup containing all transformations given by diagonal matrices.
Note that $\sigma_{123}\in G^\prime$, since $G\subset G^\prime$.
Similarly, if $\sigma_{12}\in G$, then $\sigma_{12}\in G^\prime$, so that
$$
G^\prime=\langle T^\prime, \sigma_{123}, \sigma_{12}\rangle.
$$
If $\sigma_{12}\not\in G$, then it follows from the proof of Proposition~\ref{prop:trans} that 
$$
G^\prime=\langle T^\prime, \sigma_{123}\rangle,
$$
unless $\nu(G)=C_3$ and $G\simeq (C_n\times C_{n/3})\rtimes C_3$ is generated by
$$
\mathrm{diag}(\zeta_n^3, 1, 1), \quad \mathrm{diag}(\zeta_n^2, \zeta_n, 1), \quad \sigma_{123},
$$
where $3\mid n$. In this case, $\sigma_{12}\in G^\prime$, and $G^\prime=\langle T^\prime, \sigma_{123}, \sigma_{12}\rangle$.

Now, we suppose that $\phi$ is given by a conjugation by some $g\in G^\prime$.
Then $g=t^\prime\sigma$ for some $\sigma\in\langle\sigma_{123},\sigma_{12}\rangle$. 
Note that $\sigma$ is not an identity, since conjugation by an element in $T^\prime$ acts trivially on $T$.
If $g=t^\prime\sigma_{123}$, then
$$
\frac{1}{t}=\phi(t)=t^\prime\sigma_{123}t(t^\prime\sigma_{123})^{-1}=\sigma_{123}t(\sigma_{123})^{-1},
$$
for every $t\in T$, which is a contradiction, since $\sigma_{123}t(\sigma_{123})^{-1}=t$ if and only if $t$ is an identity, 
and $T$ is nontrivial.
Similarly, if $g=t^\prime\sigma_{12}$, then
$$
\frac{1}{t}=\phi(t)=t^\prime\sigma_{12}t(t^\prime\sigma_{12})^{-1}=\sigma_{12}t(\sigma_{12})^{-1},
$$
for every $t\in T$. This implies that every element in $T$ is given by the diagonal matrix 
$$
\mathrm{diag}(a,a^{-1},1),
$$
for some $a\in k^\times$, which is only possible when $G\simeq C_3^2$. But this case is treated in Example~\ref{exam:C3-C3} and excluded, by assumption. 
Similarly, we obtain a contradiction for other possibilities for $\sigma\in\langle\sigma_{123},\sigma_{12}\rangle$.
\end{proof}

Summarizing, we obtain:

\begin{theo}
\label{thm:trans}
Let $\phi_1,\phi_2: G\hookrightarrow \PGL_3$ be different imprimitive actions such that $\phi_1(G)=\phi_2(G)$ is one of the subgroups in Proposition~\ref{prop:trans}. Then $\phi_1$ and $\phi_2$ are conjugated in $\Cr_2$ if and only if they are conjugated by the standard Cremona involution $\iota$.
Moreover, if $G\not\simeq\fA_4, \fS_4$, then conjugation by $\iota$ changes the isomorphism class of the $G$-action on $\mathbb{P}^2$.
\end{theo}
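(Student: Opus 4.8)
The plan is to reduce the statement to the purely group-theoretic reformulation set up in Section~\ref{sect:gen} and then assemble the ingredients already in place. Write $\hat G:=\phi_1(G)=\phi_2(G)\subset\PGL_3$, so $\phi_2=\phi_1\circ\psi$ for some $\psi\in\Aut(G)$ with class $\bar\psi\in\Out(G)$, and set $A:=\bar\beta\big(\Aut^{\hat G}(\bP^2)\big)=\bar\alpha\big(\Aut^{\hat G}(\bP^2)\big)$ and $\bar B:=\bar\beta\big(\Bir^{\hat G}(\bP^2)\big)$, both subgroups of $\Out(G)$. By the discussion of Problems AA/BA in Section~\ref{sect:gen} (no precomposition is needed here, since $\phi_1(G)=\phi_2(G)$ already): $\phi_1$ and $\phi_2$ are conjugated in $\Cr_2$ iff $\bar\psi\in\bar B$; they define the same action on $\bP^2$ (are conjugated in $\PGL_3$) iff $\bar\psi\in A$, so the hypothesis that they are \emph{different} actions means $\bar\psi\notin A$; and, since $\iota\in\Bir^{\hat G}(\bP^2)$ induces the automorphism $\beta(\iota)$ and hence $\iota\circ\phi_1\circ\iota^{-1}=\phi_1\circ\beta(\iota)$, the actions $\phi_1$ and $\phi_2$ are conjugated by $\iota$ iff $\bar\psi$ and $\bar\beta(\iota)$ lie in the same coset of $A$. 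If $G\simeq\fA_4$ or $\fS_4$ there is nothing to prove, as recorded just before Lemma~\ref{lemm:Antoine}: there is a unique $G$-action on $\bP^2$, so the hypothesis ``$\phi_1\ne\phi_2$ with $\phi_1(G)=\phi_2(G)$'' is vacuous. Hence we may assume $G\not\simeq\fA_4,\fS_4$.

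The next step is to recall the structural facts established in the preceding discussion. For $G\not\simeq\fA_4,\fS_4$ one has $\bar B=\langle A,\bar\beta(\iota)\rangle$: this is equation~\eqref{eq:Bir-Aut} whenever $\Bir^{\hat G}(\bP^2)=\langle\Aut^{\hat G}(\bP^2),\iota\rangle$, and for $G\simeq C_7\rtimes C_3$ it follows from Remark~\ref{rema:Bertini-Geiser}, the additional Geiser involutions in $\Bir^{\hat G}(\bP^2)$ having trivial image in $\Out(G)$. The heart of the matter is then to prove that $A$ has index $\le 2$ in $\bar B$, i.e.\ $\bar B=A\cup\bar\beta(\iota)A$. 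Since $\iota^2=\mathrm{id}$ we have $\bar\beta(\iota)^2=\mathrm{id}$, so it is enough to check that $\bar\beta(\iota)$ normalizes $A$. For this I would use the explicit form of $\Aut^{\hat G}(\bP^2)$ obtained inside the proof of Lemma~\ref{lemm:iota-beta}: away from the groups $C_3^2$ and $C_3\rtimes\fS_3$, the group $\Aut^{\hat G}(\bP^2)$ is generated by diagonal matrices together with $\sigma_{123}$ and, in some cases, $\sigma_{12}$; since $\iota$ sends each diagonal matrix to its inverse and commutes with $\sigma_{123}$ and $\sigma_{12}$, it normalizes $\Aut^{\hat G}(\bP^2)$, hence $\bar\beta(\iota)$ normalizes $A$ and $A$ is normal in $\bar B$. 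For $G\simeq C_3^2$ and $G\simeq C_3\rtimes\fS_3$ the needed facts — that $A$ has index $2$ in $\bar B=\Out(G)$ and that $\bar\beta(\iota)\notin A$ — are exactly what is checked in Example~\ref{exam:C3-C3} and the example immediately following it. In all cases, Lemma~\ref{lemm:iota-beta} gives $\bar\beta(\iota)\notin A$, so $\bar B=A\sqcup\bar\beta(\iota)A$.

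With this in hand the theorem follows. If $\phi_1$ and $\phi_2$ are conjugated in $\Cr_2$, then $\bar\psi\in\bar B=A\sqcup\bar\beta(\iota)A$; as the actions are distinct, $\bar\psi\notin A$, so $\bar\psi\in\bar\beta(\iota)A$, which says exactly that $\phi_1$ and $\phi_2$ are conjugated by $\iota$. Conversely, if $\phi_1$ and $\phi_2$ are conjugated by $\iota$, then they are conjugated in $\Cr_2$ because $\iota\in\Cr_2$. Finally, the ``Moreover'' clause is precisely Lemma~\ref{lemm:iota-beta}: for $G\not\simeq\fA_4,\fS_4$ one has $\bar\beta(\iota)\notin A$, i.e.\ $\iota\circ\phi\circ\iota^{-1}$ and $\phi$ are not $\PGL_3$-conjugate, so conjugation by $\iota$ changes the isomorphism class of the $G$-action on $\bP^2$.

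I expect the only step requiring genuine (if brief) work to be the normalization claim that $\bar\beta(\iota)$ normalizes $A$; everything else is bookkeeping built on Lemma~\ref{lemm:iota-beta}, equation~\eqref{eq:Bir-Aut} (imported from \cite{sako}), Remark~\ref{rema:Bertini-Geiser}, and the worked examples. The potential obstruction is exactly the two groups $C_3^2$ and $C_3\rtimes\fS_3$, for which $\Aut^{\hat G}(\bP^2)$ is strictly larger than the ``diagonal $+$ permutations'' group and the uniform normalization argument breaks down; those must be — and are — settled by the direct computations in Example~\ref{exam:C3-C3} and the following example.
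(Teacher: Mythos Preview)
Your proof is correct and follows essentially the same approach as the paper, which presents this theorem simply as a summary (``Summarizing, we obtain'') of the preceding discussion in Section~\ref{sect:trans}, using exactly the ingredients you cite: equation~\eqref{eq:Bir-Aut}, Remark~\ref{rema:Bertini-Geiser}, Lemma~\ref{lemm:iota-beta}, and the explicit examples. Your argument that $\bar\beta(\iota)$ normalizes $A$ is a detail the paper leaves implicit but which is indeed needed to pass from $\bar B=\langle A,\bar\beta(\iota)\rangle$ to the coset decomposition $\bar B=A\sqcup\bar\beta(\iota)A$; your justification via the explicit description of $\Aut^G(\bP^2)$ in the proof of Lemma~\ref{lemm:iota-beta}, together with the direct computations in the two exceptional examples, is exactly right.
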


\section{Intransitive}
\label{sect:intran}

In this section, we prove Theorem~\ref{thm:intran}. We assume that $G\subset \PGL_3$ is one of the subgroups listed in Proposition~\ref{prop:intrans}.
Recall that the $G$-action on $\bP^2=\bP(V)$ arises from a faithful linear representation of $G$, which decomposes as $V=\mathbf 1\oplus V_2$. Let $C_t$ be the generic stabilizer of the $G$-action on the line $\mathfrak l=\bP(V_2)\subset \bP(V)$, and $\chi$ the character of $C_t$ in the normal bundle of $\mathfrak{l}$.

If $t=1$, for two such $G$-actions $\bP(V)$ and $\bP(V')$ arising from
$$
V=\mathbf 1\oplus V_2,\quad V'=\mathbf 1\oplus V_2',
$$
we have $G$-equivariant birationalities
$$
\bP(V)\sim\bP(\mathbf1\oplus\mathbf1)\times\bP(V_2)\sim\bP(V_2')\times\bP(V_2)\sim\bP(\mathbf1\oplus\mathbf1)\times\bP(V_2')\sim\bP(V'),
$$
 by the no-name lemma. In fact, if $G$ is nonabelian, we know that $G=\fD_n$, with odd $n$, when $t=1$. 
 
If $t\ge 2$, then the class of the action
\begin{align}\label{eqn:symbol}
     [\bP(V)\actsfromright G] \in \Burn_2^{\mathrm{inc}}(G)
\end{align}
equals
\begin{align}\label{eqn:inc}
(C_{t}, \bar G\actsfromleft k(\mathfrak l), (\chi)) +
(C_{t}, \bar G\actsfromleft k(\mathfrak l), (-\chi)),
\end{align}
where
$\bar G=G/C_t$
acts generically freely on $\mathfrak l=\bP(V_2)$. These symbols are
incompressible, provided $\bar G$ is nonabelian. The main result in this section is

\begin{theo}[cf. Theorem~\ref{thm:intran}]\label{thm:intransBurn}
    Intransitive $G$-actions with $t\geq 2$ and $G$ nonabelian are birational $\iff$ the corresponding $\bar G$-actions on the invariant line $\mathfrak l$ are isomorphic and the corresponding
characters $\chi$ are equal, up to $\pm1$.
\end{theo}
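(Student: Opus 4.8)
The plan is to prove the two directions separately, using the equivariant Burnside invariant $[\bP(V)\actsfromright G]\in\Burn_2^{\mathrm{inc}}(G)$ as the obstruction in one direction and an explicit equivariant Sarkisov analysis in the other. The ``only if'' direction (birational $\Rightarrow$ same $\bar G$-action and $\chi$ up to sign) is the cleaner half: since $\bar G$ is nonabelian, both symbols $(C_t,\bar G\actsfromleft k(\mathfrak l),(\pm\chi))$ in \eqref{eqn:inc} are incompressible, so the class $[\bP(V)\actsfromright G]$ is a genuine element of $\Burn_2^{\mathrm{inc}}(G)$ and is an equivariant birational invariant by the theory recalled in Section~\ref{sect:gen}. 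If $\bP(V)$ and $\bP(V')$ are $G$-birational, their classes agree; comparing the two displayed symbols forces, after accounting for $G$-conjugation, an isomorphism of the $\bar G$-actions on $\mathfrak l\cong\bP^1$ and equality of the characters $\chi,\chi'$ up to the involution $\chi\mapsto -\chi$ coming from the two points of $\mathfrak l$ at which $C_t$ acts with opposite weights. Here I would lean on the no-name–type normalization: two faithful $\bar G$-actions on $\bP^1$ are conjugate in $\PGL_2$ (as recalled in the Remark after Corollary~\ref{coro:nice}), so ``isomorphic as $\bar G$-actions'' is the natural equivalence, but one must track the $\Aut(G)$-ambiguity allowed in the definition of $G$-birationality and check it matches the stated ``up to $\pm1$'' on $\chi$.

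For the ``if'' direction I would argue that if the $\bar G$-actions on $\mathfrak l$ are isomorphic and $\chi'=\pm\chi$, then $\bP(V)$ and $\bP(V')$ are $G$-birational. After conjugating in $\PGL_3$ we may assume the invariant lines coincide and the $\bar G$-actions literally agree; the representations $V_2,V_2'$ of $\tilde G$ then differ by a character, and the two choices of sign of $\chi$ are interchanged by the standard Cremona involution $\iota$ (or a suitable diagonal quadratic transformation) which lies in $\Bir^G(\bP^2)$ — this is exactly the mechanism seen in Example~\ref{exam:burn-I}, where the class has the symmetric form $(\cdot,(\chi))+(\cdot,(-\chi))$. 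The remaining freedom (twisting $V_2$ by a character of $C_t$ while keeping $\chi$ fixed) must be realized by an element of $\Bir^G(\bP^2)$: here one runs the equivariant Sarkisov program starting from $\bP(V)$, blowing up a $G$-orbit on $\mathfrak l$ of appropriate length to land on a del Pezzo surface, and produces a Bertini/Geiser-type birational involution centralizing $G$ (Remark~\ref{rema:Bertini-Geiser}) that implements the twist. Combining these maps shows $[\bP(V)\actsfromright G]=[\bP(V')\actsfromright G]$ can be upgraded from an equality of invariants to an actual $G$-birational map.

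The main obstacle I anticipate is the ``if'' direction: producing enough explicit elements of $\Bir^G(\bP^2)$ to connect all representations with a given $(\bar G\text{-action},\pm\chi)$. The Burnside invariant only tells us the classes are equal, which a priori does not imply $G$-birationality — one genuinely needs the geometric construction of the linking birational maps, and controlling which $G$-orbits on $\mathfrak l$ can be blown up (their lengths, general position, whether the resulting surface is a genuine del Pezzo of the right degree) is where the case analysis over the list in Proposition~\ref{prop:intrans} will be delicate, particularly when $t$ is small or $\bar G$ is one of $\fA_4,\fS_4,\fA_5$ with its rigid orbit structure on $\bP^1$. A secondary subtlety is bookkeeping the $\Aut(G)$ versus $\mathrm{Inn}(G)$ distinction so that ``$\chi$ up to $\pm1$'' is precisely the image of $\bar\beta$ and not something coarser or finer; I would settle this by the explicit group-theoretic description of $\Aut(G)$ for each $G$ in the list, as in the examples of Section~\ref{sect:trans}.
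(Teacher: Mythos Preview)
Your treatment of the obstruction direction is correct and matches the paper exactly: since $\bar G$ is nonabelian the symbols $(C_t,\bar G\actsfromleft k(\mathfrak l),(\pm\chi))$ are incompressible, and equality of Burnside classes forces the $\bar G$-action on $\mathfrak l$ and $\chi$ to agree up to sign.

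The constructive direction, however, has a genuine gap. Your proposed mechanism --- a Bertini or Geiser involution arising from blowing up a $G$-orbit on $\mathfrak l$ --- cannot do any work here: as you yourself cite from Remark~\ref{rema:Bertini-Geiser}, such an involution \emph{centralizes} $G$, so conjugating the action by it returns exactly the same action. It therefore cannot ``implement the twist'' between two actions with different weights. Likewise, the standard Cremona involution $\iota$ is only in $\Bir^G(\bP^2)$ when the non-diagonal generators of $G$ are monomial (the $\fD_n$ cases); for $\bar G=\fA_4,\fS_4,\fA_5$ the $\SL_2$-part of $G$ is not monomial and $\iota$ does not normalize it, so your sign-flip argument breaks down precisely where you flag the difficulty.

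What the paper actually does for the constructive direction is different in kind: it writes down, case by case over $\bar G$, explicit $G$-birational self-maps of $\bP^2$ built from the classical (semi-)invariant binary forms of $\bar G$ on $\bP^1_{x_2,x_3}$. For instance, when $\bar G=\fA_5$ one uses the invariants $f_{12},f_{20},f_{30}$ to produce
\[
\sigma:(x_1,x_2,x_3)\mapsto(f_{12}f_{20},\,x_1x_2f_{30},\,x_1x_3f_{30}),
\]
a degree-$32$ map which flips $t_1\mapsto -t_1$; analogous maps from $\fA_4$- and $\fS_4$-invariants handle those cases, and for $\fD_n$ (even) one needs, in addition to a Cremona-type map, the degree-$(n+1)$ map $\gamma$ of \eqref{Dn-map}. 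These maps do lie in $\Bir^G(\bP^2)$ but are neither Cremona nor Bertini/Geiser, and their effect on the weight parameters is computed directly. The missing idea in your proposal is exactly this invariant-theoretic construction; once you have it, the case analysis you anticipate is short, because for each $\bar G$ only two or three such maps are needed to realize all weight changes compatible with fixed $(\bar G\text{-action},\pm\chi)$.
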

\begin{proof}[Proof of $(\Leftarrow)$]
Assume that the $\bar G$-action and $\chi$ do not satisfy the assumption. Then the incompressible symbols \eqref{eqn:inc} in the classes \eqref{eqn:symbol} of the two $G$-actions are different in $\Burn_2^{\mathrm{inc}}(G)$. The Burnside formalism implies that the two actions are not birational, see also \cite[Section 7]{TYZ-3}. 
\end{proof}

The rest of this section is devoted to a proof of the $(\Rightarrow)$ direction of Theorem~\ref{thm:intransBurn}. The proof  of Theorem~\ref{thm:intransBurn} implies Theorem~\ref{thm:intran}, namely, that the Burnside formalism is decisive for nonabelian intransitive actions. It suffices to identify actions with the same
$C_t, \pm \chi$, and the same $\bar G$-action on $\bP^1$. We do this via a case-by-case analysis of $\bar G$, using the list of all possibilities for $G$ in Proposition~\ref{prop:intrans}. 
\begin{rema}
   When $\bar G=\fA_4$ or $\fS_4$, there is a unique $\bar G$-action on $\bP^1$. Theorem~\ref{thm:intransBurn} then implies that $G$-actions are not birational if and only if the corresponding characters $\chi$ do not differ by $\pm1$.
\end{rema}

\begin{rema}
    For intransitive $G\subset \PGL_3$, the group $\Bir^G(\bP^2)$ is always infinite -- it contains a 1-dimensional torus generated by 
    $$
    \{\mathrm{diag}(1,a,a):a\in k^\times\}.
    $$
\end{rema}

\subsection{When $\bar G=C_n$}
In this case, $G$ is abelian. Linear $G$-actions are birational if and only if the corresponding invariants in \cite{reichsteinyoussininvariant} coincide. 
In fact, birational classification of linear actions of {\em abelian} groups has been settled, in all dimensions, in \cite[Theorem 7.1]{reichsteinyoussininvariant}.

\subsection{When $\bar G=\fD_n$ with odd $n$}
Recall that all $G$-actions are conjugated in $\Cr_2$ if the generic stabilizer $C_t$ of the $G$-action on $\bP(V_2)$ is trivial.
Hence, we assume that $t\geqslant 2$.
In each case, the standard Cremona involution $\iota$ is contained in $\mathrm{Bir}^G(\mathbb{P}^2)$.
It would be interesting to describe generators of $\mathrm{Bir}^G(\mathbb{P}^2)$.

\begin{proof}[Proof of Theorem~\ref{thm:intransBurn} in this case]
By Proposition~\ref{prop:intrans}, the action of $G$ depends on the choice of primitive weights $t_1,t_2,t_3$ in 
$$
\mathrm{diag}(1,\zeta_r^{t_1},\zeta_r^{t_1}),\quad \mathrm{diag}(1,\zeta_{2n}^{t_2},\zeta_{2n}^{-t_2}),\quad  (x_1,\zeta_{2^m}^{t_3}x_3,\zeta_{2^m}^{t_3}x_2).
$$
The action of the generic stabilizer of  $\mathfrak l=\{x=0\}\subset\bP^2$ is generated by 
$$
\mathrm{diag}(1,\zeta_r^{t_1},\zeta_r^{t_1}),\quad \mathrm{diag}(1,\zeta_{2^{m-1}}^{t_3},\zeta_{2^{m-1}}^{t_3}).
$$
The weights can be changed birationally as follows:
\begin{itemize}
    \item The standard Cremona involution $\iota$ composed with transposition of $x_2,x_3$ preserves $t_2$ and inverts 
$$
t_1\mapsto -t_1,\quad t_3\mapsto -t_3,
$$
\item the transposition of $x_2,x_3$ preserves $t_1,t_3$ and inverts $t_2\mapsto -t_2$,
\item the coordinate change $x_3\mapsto -x_3$ preserves $t_1,t_2$ and maps $t_3\mapsto t_3+2^{m-1}$.
\end{itemize}
For two choices of weights $(t_1,t_2,t_3)$, if the corresponding characters $\chi$  differ by $\pm1$ and the induced $\fD_n$-actions on $\mathfrak l$ are isomorphic, then the weights can be interchanged by a composition of the three maps above, and thus the actions are birational. 
\end{proof}

\subsection{When $\bar G=\fD_n$ with even $n$}

The generic stabilizer $C_t$ of $\mathfrak l$ is never trivial,
and all possibilities for $G$ are listed in Proposition~\ref{prop:intrans}. 
The standard Cremona involution $\iota$ is contained in $\mathrm{Bir}^G(\mathbb{P}^2)$,
and it fits in the $G$-commutative diagram:
$$
\xymatrix{
&S\ar@{->}[dr]^{\phi}\ar@{->}[dl]_{\pi}&\\%
\bF_{1}\ar@{->}[d]_{\eta}&&\bF_{1}\ar@{->}[d]^{\eta}\\
\bP^2\ar@{-->}[rr]_{\iota}&&\bP^2}
$$
where $\eta$ is the blowup of the $G$-fixed point,
$\pi$ is the blowup of the orbit of length $2$ in the strict transform $L$ of the line $\mathfrak{l}\subset\bP^2$,
and $\phi$ is the contraction of the strict transforms of the fibers of the natural projection $\bF_1\to\bP^1$ that pass through this orbit.

Observe that $\mathrm{Bir}^G(\mathbb{P}^2)$ also contains the birational selfmap of $\mathbb{P}^2$ given by
\begin{equation}
\label{Dn-map}
\gamma: (x_1,x_2,x_3)\mapsto (x_1x_2^{n/2}x_3^{n/2},x_2(x_2^n-x_3^n),x_3(x_2^n-x_3^n)).
\end{equation}
To describe the geometry of $\gamma$, let $E$ be the $\eta$-exceptional curve.
Then $\gamma$ fits in the following $G$-commutative diagram: 
$$
\xymatrix{
&S_1\ar@{->}[dr]^{\phi_1}\ar@{->}[dl]_{\pi_1}&&S_2\ar@{->}[dr]^{\phi_2}\ar@{->}[dl]_{\pi_2}&&S_{r}\ar@{->}[dr]^{\phi_r}\ar@{->}[dl]_{\pi_r}&\\%
\bF_{1}\ar@{->}[d]_{\eta}&&\bF_{n-1}&&\bF_{n-2}\cdots \,\bF_3&&\bF_1\ar@{->}[d]^{\eta}\\
\bP^2\ar@{-->}[rrrrrr]_{\gamma}&&&&&&\bP^2}
$$
where $r=\frac{n}{2}+1$, $\pi_1$ is the blowup of the orbit of length $n$ in $L$,
$\phi_1$ is the contraction of the strict transforms of the fibers of the natural projection $\bF_1\to\bP^1$ that pass through this orbit,
and each $\pi_i$ for $i\geqslant 2$ is the blowup of the orbit of length $2$ on the strict transform of $E$,
and each $\phi_i$ for $i\geqslant 2$ is the contraction of the strict transforms of the fibers of the natural projection $\bF_{n+3-2i}\to\bP^1$ that pass through this orbit.

\begin{proof}[Proof of Theorem~\ref{thm:intransBurn} in this case]
In each of the families, the $G$-action depends on primitive weights $t_1,t_2,t_3,t_4$ as  in  
\begin{enumerate}
    \item $
 \mathrm{diag}(1,\zeta_r^{t_1},\zeta_r^{t_1}),\quad \mathrm{diag}(1,\zeta_{2n}^{t_2},\zeta_{2n}^{-t_2}),\quad  (x_1,\zeta_4x_3,\zeta_{4}x_2).
 $
 \item $
 \mathrm{diag}(1,\zeta_r^{t_1},\zeta_r^{t_1}),\quad \mathrm{diag}(1,\zeta_{2^{m+1}}^{t_3}\zeta_{2n}^{t_2},\zeta_{2^{m+1}}^{t_3}\zeta_{2n}^{-t_2}),\quad  (x_1,\zeta_4x_3,\zeta_{4}x_2).
 $
  \item $
 \mathrm{diag}(1,\zeta_r^{t_1},\zeta_r^{t_1}),\quad \mathrm{diag}(1,\zeta_{2n}^{t_2},\zeta_{2n}^{-t_2}),\quad  (x_1,\zeta_{2^{m+1}}^{t_4}\zeta_4x_3,\zeta_{2^{m+1}}^{t_4}\zeta_{4}x_2).
 $
\end{enumerate} 
The generic stabilizer of $\mathfrak l=\{x_1=0\}\subset\bP^2$ is generated by 
\begin{enumerate}
    \item $\mathrm{diag}(1,\zeta_r^{t_1},\zeta_r^{t_1}),\quad \mathrm{diag}(1,-1,-1)$;
    \item  $\mathrm{diag}(1,\zeta_r^{t_1},\zeta_r^{t_1}), \quad \mathrm{diag}(1,\zeta_{2^m}^{t_3},\zeta_{2^m}^{t_3})$;
     \item  $\mathrm{diag}(1,\zeta_r^{t_1},\zeta_r^{t_1})\quad \mathrm{diag}(1,\zeta_{2^m}^{t_4},\zeta_{2^m}^{t_4})$.
\end{enumerate}
    The weights can be altered by the following maps, in all cases
    \begin{itemize}
        \item the birational map with image $(\frac{1}{x_1},\frac{1}{x_3},-\frac{1}{x_2})$ maps
        $$
        t_1\mapsto -t_1,\quad t_2\mapsto t_2,\quad t_3\mapsto -t_3,\quad t_4\mapsto -t_4,
        $$
        \item the coordinate change $\mathrm{diag}(1,1,-1)$ maps
          $$
        t_1\mapsto t_1, \quad t_2\mapsto t_2,\quad t_3\mapsto t_3,\quad t_4\mapsto t_4+2^m.
        $$
         \item the birational map $\gamma$ maps 
       $$
         t_1\mapsto t_1, \quad t_2\mapsto t_2,\quad t_3\mapsto t_3+2^m,\quad t_4\mapsto t_4,
        $$
        or equivalently, 
         $$
        t_1\mapsto t_1, \quad t_2\mapsto t_2+n,\quad t_3\mapsto t_3,\quad t_4\mapsto t_4.
        $$
    \end{itemize}
    Similarly, for two actions, if the characters $\chi$ of the generic stabilizer of $\mathfrak l$ differ by $\pm1$ and the induced $\fD_n$-actions on $\mathfrak l$ are isomorphic, we know that the corresponding weights can be interchanged by a composition of the three maps above; thus, the actions are birational. 
\end{proof}

To obtain an alternative proof of Theorem~\ref{thm:intransBurn} in this case,
one can explicitly describe generators of  $\mathrm{Bir}^G(\mathbb{P}^2)$:
Recall that $\iota,\gamma\in\mathrm{Bir}^G(\mathbb{P}^2)$. 
For every $\lambda\in k^\times$, with $\lambda^n\ne 1$,
let $\tau_\lambda\in \mathrm{Bir}^G(\mathbb{P}^2)$ be the birational map given by
\begin{equation}
\label{Dn-map-sigma}
\tau_\lambda: (x_1,x_2,x_3)\mapsto (x_1(x_2^{2n}-x_3^{2n}),x_2f_{\lambda},x_3f_{\lambda}),
\end{equation}
where 
$$
f_{\lambda}=\prod_{r=0}^{n-1}\left((x_1-\lambda\zeta_n^r x_2)\cdot(x_2-\lambda\zeta_n^r x_1)\right).
$$
Then $\tau_\lambda$ fits in the following $G$-commutative diagram:
$$
\xymatrix{
&S^\prime\ar@{->}[dr]^{\phi^\prime}\ar@{->}[dl]_{\pi^\prime}&&S^{\prime\prime}\ar@{->}[dr]^{\phi^{\prime\prime}}\ar@{->}[dl]_{\pi^{\prime\prime}}&&S^{\prime\prime\prime}\ar@{->}[dr]^{\phi^{\prime\prime\prime}}\ar@{->}[dl]_{\pi^{\prime\prime\prime}}&\\%
\bF_{1}\ar@{->}[d]_{\eta}&&\bF_{2n-1}&&\bF_{n-1}&&\bF_1\ar@{->}[d]^{\eta}\\
\bP^2\ar@{-->}[rrrrrr]_{\tau_{\lambda}}&&&&&&\bP^2}
$$
where $\pi^\prime$ is the blowup of the orbit of length $2n$ on $L$ that is mapped by $\chi$ to the $G$-orbit of $[0:\lambda:1]$,
$\pi^{\prime\prime}$ and $\pi^{\prime\prime\prime}$ are blowups of the orbits of length $n$ on the strict transform of $E$,
and $\phi^\prime$, $\phi^{\prime\prime}$ and $\phi^{\prime\prime\prime}$ are contractions of the strict transforms of the fibers of the corresponding $\mathbb{P}^1$-bundles passing through the blown up orbits.

\begin{lemm} 
\label{lemma:Dn}
Suppose $n\geqslant 10$. Then  
$\Bir^G(\bP^2)$ is generated by $\Aut^G(\bP^2)$, the standard Cremona transformation $\iota$, the map  $\gamma$ given by \eqref{Dn-map}, and  $\tau_\lambda$ given by \eqref{Dn-map-sigma} for $\lambda\in k^\times$ with  $\lambda^n\ne 1$.
\end{lemm}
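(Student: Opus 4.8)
The plan is to run the equivariant Sarkisov program starting from $\bP^2$ with the $G$-action of Proposition~\ref{prop:intrans}, and to show that every $G$-Sarkisov link is accounted for by one of the listed generators. First I would catalogue the $G$-orbits in $\bP^2$ of length $<9$ that can be blown up to a del Pezzo surface. Since $G$ fixes the point $\mathfrak l^{\perp}$ (the pole of the invariant line), the only $G$-invariant objects are: the fixed point $[1:0:0]$, the invariant line $\mathfrak l=\{x_1=0\}$, and $\bar G$-orbits on $\mathfrak l$ together with their ``duals.'' For $\bar G=\fD_n$ with $n\ge 10$, the $\bar G$-orbits on $\bP^1=\mathfrak l$ have length $2$ (the two fixed points of the rotation $C_n$), length $n$ (the two special $\fD_n$-orbits), and length $2n$ (the general ones); correspondingly $\bP^2$ has $G$-orbits of the same lengths situated on $\mathfrak l$, plus the orbit of length $2$ given by $\{[1:0:0]\}\cup(\text{something})$ — here I must be careful, as the fixed point itself has length $1$, so the relevant small orbits come in pairs $\{$fixed point, orbit on $\mathfrak l\}$ assembled from $\eta$-exceptional geometry on $\bF_1$. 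The key point is that for $n\ge 10$ these are the \emph{only} orbits of length $<9$, so every link out of $\bP^2$ begins by blowing up one of them.

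Next I would match each admissible starting blowup with a link, using Example~\ref{exam:sarkisov}. The blowup of the $G$-fixed point produces the DP-CB link to $\bF_1$ (the $r=1$ case); composing two such, one reads off $\iota$ — this is exactly the commutative diagram displayed before \eqref{Dn-map}. The blowup of the length-$2$ orbit on $L\subset\bF_1$ and subsequent contraction of fibers is precisely the left half of the diagram for $\iota$. The blowup of the length-$n$ orbit on $L$, run through the chain $\bF_1\to\bF_{n-1}\to\cdots\to\bF_3\to\bF_1$, is the diagram displayed for $\gamma$ in \eqref{Dn-map}. Finally, the blowup of a general $G$-orbit of length $2n$ lying over the $\bar G$-orbit of $[0:\lambda:1]$, passed through $\bF_1\to\bF_{2n-1}\to\bF_{n-1}\to\bF_1$, is the diagram for $\tau_\lambda$ in \eqref{Dn-map-sigma}. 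One must also check the del Pezzo branches: blowing up orbits of length $3,5,6,7,8$ would require $\bar G=\fD_n$ to have such orbits on $\bP^1$, which it does not for $n\ge 10$ (the orbit sizes are $2,n,2n$ and none of $n,2n$ lies in $\{3,5,6,7,8\}$ once $n\ge 10$); hence no DP-DP link starting from $\bP^2$ exists, and the only conic-bundle targets reachable are Hirzebruch surfaces, over which the further links are the CB-CB links already encoded in $\gamma$, $\tau_\lambda$, and $\iota$.

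The argument then closes by the standard ``no stray links'' bookkeeping: any $\varphi\in\Bir^G(\bP^2)$ factors as a composition of $G$-Sarkisov links between $G$-Mori fiber spaces; by the previous two paragraphs every link either lands back on $\bP^2$ (where it is realized by an element of $\Aut^G(\bP^2)$ composed with one of $\iota,\gamma,\tau_\lambda$) or lands on a Hirzebruch surface from which the only escape is the inverse link or a CB-CB link that, composed appropriately, again descends to one of the listed generators. Running the induction on the number of links, and using that $\Aut^G(\bP^2)$ together with $\iota,\gamma,\tau_\lambda$ is closed under these moves, one concludes that $\Bir^G(\bP^2)$ is generated as claimed.

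The main obstacle I anticipate is the orbit census for the Hirzebruch-surface stages and the verification that the chains of elementary transformations $\bF_1\dashrightarrow\bF_{n-1}\dashrightarrow\cdots$ actually have no \emph{additional} admissible blowup centers of small length along the way — i.e., that the only points one can $G$-equivariantly blow up on each intermediate $\bF_k$ are the ones appearing in the diagrams for $\gamma$ and $\tau_\lambda$. This requires tracking the $G$-action on the negative section and on the fibers of each $\bF_k$, and checking that the hypothesis $n\ge 10$ is exactly what rules out small exceptional orbits (e.g.\ orbits of length $\le 2$ other than those already used, or coincidences forcing a del Pezzo of degree $\ge 5$). Once that combinatorial bound is in place, the rest is the routine Sarkisov bookkeeping sketched above.
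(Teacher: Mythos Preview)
Your overall strategy---factor an arbitrary $\varphi\in\Bir^G(\bP^2)$ into $G$-Sarkisov links and match each link to one of the listed generators---is exactly the paper's approach. But there is a genuine gap at precisely the point you flag as the ``main obstacle,'' and a couple of smaller errors along the way.

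First, two minor corrections. Your orbit census on $\bP^2$ is muddled: the small $G$-orbits are simply the fixed point $[1:0:0]$ (length $1$) and the orbit $\{[0:1:0],[0:0:1]\}$ on $\mathfrak l$ (length $2$); there is no ``pair'' construction needed. Second, your claim that ``no DP--DP link starting from $\bP^2$ exists'' is wrong: blowing up the length-$2$ orbit on $\mathfrak l$ is a DP--DP link $\bP^2\dashrightarrow\bP^1\times\bP^1$ (the $r=2$ case of Example~\ref{exam:sarkisov}). The paper handles this by checking that the only small $G$-orbit on $\bP^1\times\bP^1$ is a fixed point, so this branch is a dead end and the round trip is absorbed into the listed generators.

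The real gap is the obstacle you anticipate but do not resolve: why are there no stray CB--CB links on the intermediate Hirzebruch surfaces, centered at points away from the strict transforms of $E$ and $L$? You propose to track the $G$-action on each $\bF_k$ individually, but this is neither necessary nor how the paper proceeds. The missing idea is this: because $n$ is even, the generic stabilizer $C_t$ of $\mathfrak l$ is \emph{nontrivial}; hence any $G$-orbit in $\bF_1$ (or any $\bF_k$) lying off $E\cup L$ meets some fiber of the projection to $\bP^1$ in at least $t\geqslant 2$ points, which violates the CB--CB link condition that each fiber contain at most one point of the blown-up orbit. This single observation forces all indeterminacy of the composite map $\rho\colon\bF_1\dashrightarrow\bF_1$ to lie on $E\cup L$, where the only $G$-orbits have lengths $2$, $n$, or $2n$. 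The paper then peels these off by composing with $\tau_\lambda$ (or $\iota\circ\tau_\lambda\circ\iota$) to kill length-$2n$ centers, then $\gamma$ (or $\iota\circ\gamma\circ\iota$) to kill length-$n$ centers, leaving only length-$2$ centers, which are handled by $\iota$ and $\Aut^G(\bP^2)$. Without the $C_t\ne 1$ observation your argument does not close, and your proposed surface-by-surface verification would be both laborious and unnecessary.
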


\begin{proof}
Take any $\varphi\in \Bir^G(\bP^2)$ such that $\varphi\not\in\Aut^G(\bP^2)$.
Then $\varphi$ can be decomposed into a sequence of $G$-Sarkisov links.
Since $n\geqslant 10$, $\mathbb{P}^2$ contains exactly two $G$-orbits of length $<9$: the $G$-fixed point, and the unique $G$-orbit of length $2$, which is contained in the $G$-invariant line.  Hence, the only $G$-Sarkisov links that start at $\mathbb{P}^2$ are the link 
$$
\xymatrix{
&\bF_{1}\ar@{->}[dl]_{\eta}\ar@{->}[dr]\\
\bP^2&&\bP^1}
$$
where $\bF_{1}\to\bP^1$ is the natural projection, and the link
$$
\xymatrix{
&Y\ar@{->}[dl]\ar@{->}[dr]\\
\bP^2&&\bP^1\times\bP^1}
$$
where the left map is the blow up of the unique $G$-orbit of length $2$
and the right map is the contraction of the strict transform of the $G$-invariant line $\mathfrak{l}\subset\mathbb{P}^2$.
Moreover, the $G$-fixed point in $\bP^1\times\bP^1$ is the only $G$-orbit of length $<8$ in $\bP^1\times\bP^1$,
so the only $G$-Sarkisov link that starts at $\bP^1\times\bP^1$ is the inverse of the link described above.

Moreover, since $C_t\ne 1$, any $G$-orbit of a point in $\mathbb{F}_1$ away from $E\cup L$ intersects some fiber of the natural projection $\mathbb{F}_1\to\mathbb{P}^1$ at more than 1 point. Now, using the classification of two-dimensional $G$-Sarkisov links \cite{Isk}, 
we see that we have the following $G$-commutative diagram 
$$
\xymatrix{
&\bF_{1}\ar@{-->}[rr]^{\rho}\ar@{->}[ddl]_{\eta}\ar@{->}[d]&&\bF_{1}\ar@{->}[d]\ar@{->}[ddr]^{\eta}&\\
&\mathbb{P}^1\ar@{->}[rr]_{\upsilon}&&\mathbb{P}^1&\\
\bP^2\ar@{-->}[rrrr]_{\varphi}&&&&\bP^2}
$$
where $\upsilon$ is an isomorphism, and  $\rho$ is a $G$-birational map such that its indeterminacy points are $G$-orbits in $E\cup L$, and every fiber of the projection $\bF_{1}\to\mathbb{P}^1$ contains at most one of these indeterminacy points. Composing $\varphi$ with a composition of appropriate maps $\tau_\lambda$ or $\iota\circ\tau_\lambda\circ\iota$, we may assume that the indeterminacy points of the map $\rho$ are $G$-orbits of length $2$ or $n$. Composing $\varphi$ with $\gamma$ or $\iota\circ\gamma\circ\iota$, we may further assume that the indeterminacy points of  $\rho$ are $G$-orbits of length $2$. Hence, either $\rho\in\Aut^G(\bP^2)$ or $\rho\circ\iota\in\Aut^G(\bP^2)$, which implies the required assertion.
\end{proof}    
    
It seems possible to weaken (or remove) the condition $n\geqslant 10$ in Lemma~\ref{lemma:Dn}; this would entail taking into account Geiser and Bertini involutions that may appear in $\Bir^G(\bP^2)$. 

\subsection{When $\bar G=\fA_4$}
Recall that there is a unique $\fA_4$-action on $\bP^1$. We find birational maps between two $G$-actions when the corresponding characters $\chi$ differ by $\pm1$. 
\begin{proof}[Proof of Theorem~\ref{thm:intransBurn} in this case]
    The action depends on choices of primitive (with the exception that $t_2$ can be 0) weights $t_1,t_2, t_3$ in 
    \begin{enumerate}
       \item $\mathrm{diag}(1,\zeta_r^{t_1},\zeta_r^{t_1}),\,\,(x_1,\zeta_{3}^{t_2}\zeta_3^2x_2,\zeta_{3}^{t_2}(\zeta_3x_3-x_2))$;
       \item $\mathrm{diag}(1,\zeta_r^{t_1},\zeta_r^{t_1}),\,\, (x_1,\zeta_{3^{m+1}}^{t_3}\zeta_{3}^{t_2}\zeta_3^2x_2,\zeta_{3^{m+1}}^{t_3}\zeta_{3}^{t_2}(\zeta_3x_3-x_2))
    $
    \end{enumerate}
    with $m\geq 1$. The respective action of the generic stabilizer of $\mathfrak l$ is generated by 
    \begin{enumerate}
    \item $    \mathrm{diag}(1,\zeta_r^{t_1},\zeta_r^{t_1}),\quad \mathrm{diag}(1,-1,-1);
    $
        \item 
$    \mathrm{diag}(1,\zeta_r^{t_1},\zeta_r^{t_1}),\quad \mathrm{diag}(1,\zeta_{3^{m}}^{t_3},\zeta_{3^{m}}^{t_3}),\quad \mathrm{diag}(1,-1,-1).
    $
        \end{enumerate}
Let 
\begin{align*}
    h_4&=x_2^4 - \frac{8\zeta_3+4}{3}x_2^3x_3 - 2x_2^2x_3^2 + \frac{8\zeta_3 + 
            4}{3}x_2x_3^3 + x_3^4,\\
     f_4&=x_2^3x_3 + (-2\zeta_3 - 1)x_2^2x_3^2 - x_2x_3^3,\\
     f_6&= x_2^6 - (4\zeta_3 + 2)x_2^5x_3 - 5x_2^4x_3^2 - 5x_2^2x_3^4 + 
            (4\zeta_3 + 2)x_2x_3^5 + x_3^6.
\end{align*}
We note that $f_4,h_4,g_6$ are semi-invariant forms under the $G$-action. They correspond to orbits of length 4 and 6 of the $\fA_4$-action on $\bP^1$. Then
\begin{itemize}
    \item The birational map $\sigma:(x_1,x_2,x_2)\mapsto (x_1h_4,x_2f_4,x_3f_4)$ maps 
    $$
    t_1\mapsto t_1,\quad t_2\mapsto t_2+1,\quad t_3\mapsto t_3,
    $$
    or equivalently 
    $$
    t_1\mapsto t_1,\quad t_2\mapsto t_2,\quad t_3\mapsto t_3+3^{m-1}.
    $$
    \item The birational map $\gamma: (x_1,x_2,x_3)\mapsto (f_6h_4,x_1x_2f_4^2,x_1x_3f_4^2)$ maps 
    $$
    t_1\mapsto -t_1,\quad t_2\mapsto -t_2,\quad t_3\mapsto-t_3.
    $$
\end{itemize}
 For two $G$-actions, as long as the characters of the stabilizer of $\mathfrak l$ differ by $\pm 1$, we can interchange the corresponding weights $t_1,t_2,t_3$ of the two actions using maps above, and thus they are birational. 
\end{proof}

\subsection{When $\bar G=\fS_4$}
Similarly, there is a unique $\fS_4$-action on $\bP^1$. We find birational maps between two actions when their corresponding characters $\chi$ differ by $\pm1$. 
\begin{proof}[Proof of Theorem~\ref{thm:intransBurn} in this case]
 The action depends on choices of primitive weights $t_1,t_2, t_3$ in 
$$
\mathrm{diag}(1,\zeta_r^{t_1},\zeta_r^{t_1}),\quad (x_1,\zeta_{2^{m+1}}^{t_2}\zeta_{4}^{t_3}(-2x_2+s_1x_3), \zeta_{2^{m+1}}^{t_2}\zeta_4^{t_3}(s_2x_2+2x_3))
$$
with $m\geq 0$.    The corresponding action of the generic stabilizer of $\mathfrak l$ is generated by 
    \begin{itemize}
    \item $    \mathrm{diag}(1,\zeta_r^{t_1},\zeta_r^{t_1}),\quad \mathrm{diag}(1,-1,-1),\quad 
    $ when $m=1$,
        \item 
$    \mathrm{diag}(1,\zeta_r^{t_1},\zeta_r^{t_1}),\quad\mathrm{diag}(1,-\zeta_{2^{m}}^{t_2},-\zeta_{2^{m}}^{t_2}), \quad
    $ when $m\ne 1$.
        \end{itemize}
        Let 
        $f_6,f_8,f_{12},h_{12}$ be semi-invariant binary forms of degree $6$, $8$, 
    $12$ and $12$, under the $\fS_4$-action on $\bP^1_{x_2,x_3}$ generated by 
        $$
        (-2x_2+s_1x_3,s_2x_2+2x_3),\quad (-s_2x_2-x_3,s_1x_2+(s_1+1)x_3)
        $$
        such that $f_{12}$ and $h_{12}$ are linearly independent. Such forms are unique up to scalars. Then 
\begin{itemize}
    \item The birational map $\sigma:(x_1,x_2,x_2)\mapsto (x_1h_{12},x_2f_{12},x_3f_{12})$ maps 
    $$
    t_1\mapsto t_1,\quad t_2\mapsto t_2+2^m,\quad t_3\mapsto t_3,
    $$
    or equivalently 
    $$
    t_1\mapsto t_1,\quad t_2\mapsto t_2,\quad t_3\mapsto t_3+2.
    $$
    \item The birational map $\gamma: (x_1,x_2,x_3)\mapsto (f_8,x_1x_2f_6,x_1x_3f_6)$ maps 
    $$
    t_1\mapsto -t_1,\quad t_2\mapsto -t_2,\quad t_3\mapsto-t_3.
    $$
\end{itemize}
When the characters of the stabilizer of $\mathfrak l$ in two actions differ by $\pm 1$, we can interchange the corresponding weights $t_1,t_2,t_3$ using the maps above, and thus the actions are birational.
\end{proof}

\subsection{When $\bar G=\fA_5$}

Recall that the group $G$ is generated by 
$$
\mathrm{diag}(1,\zeta_r,\zeta_r)
$$
and 
\begin{align}\label{eqn:sl25}
(x_1,(\zeta_{5}^3+\zeta_5^4)x_2-(\zeta_5^4+1)x_3,\zeta_5^3x_2+(\zeta_5^2+\zeta_5)x_3),(x_1,\zeta_5^2x_3,-\zeta_5^3x_2).
\end{align}
In particular, there is a unique minimal lift of $\fA_5\subset\PGL_2$ to $\GL_2$ given by \eqref{eqn:sl25}. This minimal lift is isomorphic to $\SL_2(\bF_5)$. It has two faithful 2-dimensional representations, giving rise to $2$ non-isomorphic $\fA_5$-actions on $\bP^1$.

\begin{proof}[Proof of Theorem~\ref{thm:intransBurn} in this case]  Fixing the  $\fA_5$-action on $\bP^1$, we see that the isomorphism class of the $G$-action only depends on the weight of the generic stabilizer of $\mathfrak l$. The action of the stabilizer is generated by 
 \begin{itemize}
     \item $
 \mathrm{diag}(1,\zeta_r^{t_1},\zeta_r^{t_1}),\quad 
 $ if $r$ is even,
 \item $
 \mathrm{diag}(1,\zeta_r^{t_1},\zeta_r^{t_1}),\quad  \mathrm{diag}(1,-1,-1),\quad 
 $ if $r$ is odd,
 \end{itemize}
 where $t_1$ indicates the weight. Let $f_{12}$, $f_{20}$ and $f_{30}$ be the $\SL_2(\bF_5)$-invariant binary forms of degrees $12$, $20$ and $30$ in variables $x_2$ and $x_3$, which correspond to the unique orbits of lengths 12, 20 and 30 of the $\fA_5$-action on $\bP^1$. Then the birational map 
 $$
 \sigma: (x_1,x_2,x_3)\mapsto (f_{12}f_{20},x_1x_2f_{30},x_1x_3f_{30})
 $$
changes the weight $t_1\mapsto -t_1$. We conclude that two actions are birational if and only if the characters of the stabilizer of $\mathfrak l$ differ by $\pm1$.
\end{proof}

This completes the proof of Theorem~\ref{thm:intransBurn}, and Theorem~\ref{thm:intran} follows as a corollary.

\bibliographystyle{alpha}
\bibliography{linear-p2}

\end{document}